\theoremstyle{plain}
\newtheorem{theorem}{Theorem}
\newtheorem{lemma}[theorem]{Lemma}
\newtheorem{proposition}[theorem]{Proposition}
\theoremstyle{definition}
\newtheorem{definition}[theorem]{Definition}
\newtheorem{remark}[theorem]{Remark}
\numberwithin{equation}{section}
\numberwithin{theorem}{section}
\newcommand{\M}{S_\mu}
\newcommand{\cG}{\mathcal{G}}
\newcommand{\R}{\mathbb{R}}
\newcommand{\N}{\mathbb{N}}
\newcommand{\eps}{\varepsilon}
\newcommand{\dist}{{\rm dist}}
\newcommand{\edge}{{\rm e}}
\newcommand{\pa}{\partial}
\author{Xiaojun Chang \footnote{changxj100@nenu.edu.cn}} \affil{School of Mathematics and Statistics \& Center for Mathematics and Interdisciplinary Sciences,
 Northeast Normal University, Changchun 130024, Jilin,
PR China}
\author{Louis Jeanjean\footnote{louis.jeanjean@univ-fcomte.fr}}  \affil{Laboratoire de Math\'ematiques (CNRS UMR 6623), Universit\'e de Franche-Comt\'e,
Besancon 25030, France}
\author{Nicola Soave\footnote{nicola.soave@polimi.it}}  \affil{Dipartimento di Matematica, Politecnico di Milano, Via Bonardi 9, 20133, Milano, Italy}
\title{Normalized solutions of $L^2$-supercritical NLS equations on compact metric graphs}
\date{}
\begin{document}

\maketitle

\begin{abstract}
\noindent This paper is devoted to the existence of non-trivial bound states of prescribed mass for the mass-supercritical nonlinear Schr\"odinger equation on compact metric graphs. The investigation is based upon a variational principle which combines the monotonicity trick and a min-max theorem with second order information for constrained functionals, and upon the blow-up analysis of bound states with prescribed mass and bounded Morse index.
\end{abstract}

\medskip

{\small \noindent \text{Key Words:} Nonlinear Schr\"odinger equations; $L^2$-supercritical;
Compact metric graph; Variational methods.\\
\text{Mathematics Subject Classification:} 35J60, 47J30}

\medskip

{\small \noindent \text{Acknowledgements:}
X. J. Chang is partially supported by NSFC (11971095).
N. Soave is partially supported by the INDAM-GNAMPA group.}

\section{Introduction and main results}\label{intro}

In this paper we investigate the existence of non-constant critical points for the \emph{mass supercritical} NLS energy functional $E(\cdot\,,\mathcal{G}):H^1(\cG) \to \R$ defined by
\begin{eqnarray}\label{1.1}
E(u,\mathcal{G})=\frac{1}{2}\int_{\mathcal{G}}|u'|^2\, dx-\frac{1}{p}\int_{\mathcal{G}}|u|^p\, dx, \qquad p>6
\end{eqnarray}
under the mass constraint
\begin{equation}\label{mass const}
\int_{\mathcal{G}}|u|^2\,dx=\mu>0,
\end{equation}
where $\mathcal{G}$ is a \emph{compact metric graph}. Critical points, also called \emph{bound states}, solve the stationary nonlinear Schr\"odinger equation (NLS) on $\cG$
\begin{equation}\label{stat nls}
-u''+\lambda u=|u|^{p-2}u
\end{equation}
for some Lagrange multiplier $\lambda$, coupled with Kirchhoff condition at the vertexes (see \eqref{1.2} below). In turn, solutions to \eqref{1.2} give standing waves of the time-dependent NLS on $\cG$.

There are several physical motivations to consider Schr\"odinger equations on metric graphs. We refer the interested reader to the recent paper \cite{KNP}, to \cite{AST11, BK, No}, and to the references therein. In addition, the problem on metric graphs presents interesting new mathematical features with respect to the Euclidean case. For these reasons, the problem of existence of bound states on metric graphs attracted a lot of attention in the past decade, mainly in the \emph{subcritical} or \emph{critical regimes}, which correspond to $p \in (2,6)$ or $p=6$, respectively. In such frameworks, a particularly relevant issue concerns the existence of \emph{ground states}, that is, global minimizers of the energy under the mass constraint, see \cite{ACFN14, AST-CVPDE2015,AST-JFA2016,AST-CMP2017} for non-compact $\cG$, and \cite{CDS, D-JDE2018} for the compact case. We also refer the interested reader to \cite{BD19, DTcalcvar, NP, PiSo, PSV, ST-JDE2016} and references therein for strictly related issues (problems with localized nonlinearities, combined nonlinearities, existence of critical points in absence of ground states), always in subcritical and critical regimes.

In striking contrast, as far as we know, the supercritical regime on general graphs is essentially untouched. In this case the energy is always unbounded from below, and ground states never exist. However, it is natural to discuss the existence of bound states, and in this paper we address this problem on any \emph{compact} graph $\cG$. An interesting feature of this setting is that there always exists a constrained constant (trivial) critical point of $E(\cdot\,,\cG)$, obtained by taking the constant function $\kappa_\mu:= (\mu/\ell)^{1/2}$, where $\ell$ denotes the total length of $\cG$. Thus, in order to obtain a non-trivial result, one has to focus on existence of non-constant bound states.
\subsection*{Basic notations and main result}
We recall that a metric graph $\cG = (\mathcal{E}, \mathcal{V})$ is a connected metric space obtained by glueing together a number of closed line intervals, the edges in $\mathcal{E}$, by identifying some of their endpoints, the vertexes in $\mathcal{V}$. The peculiar way in which these identifications are performed defines the topology of $\cG$. Any bounded edge ${\rm e}$ is identified with a closed bounded interval $I_{\rm e}$, typically $[0,\ell_{\rm e}]$ (where $\ell_{\rm e}$ is the length of ${\rm e}$), while unbounded edges are identified with (a copy of) the closed half-line $[0,+\infty)$. A metric graph is compact if and only if it has a finite number of edges, and none of them is unbounded.

A function $u$ on $\cG$ is a map $u: \cG \to \R$, which is identified with a vector of functions $\{u_{\mathrm{e}}\}$, where each $u_{\mathrm{e}}$ is defined on the corresponding interval $I_{\rm e}$. Endowing each edge with Lebesgue measure, one can define $L^p$ spaces over $\cG$, denoted by $L^p(\cG)$, in a natural way, with norm
\[
\|u\|_{L^p(\cG)}^p = \sum_{\mathrm{e}} \|u_{\mathrm{e}}\|_{L^p(\mathrm{e})}^p.
\]
The Sobolev space $H^1(\cG)$ is defined as the set of functions $u: \cG \to \R$ such that $u_{\mathrm{e}} \in H^1([0, \ell_{\mathrm{e}}])$ for every bounded edge $\mathrm{e}$, $u_{\mathrm{e}} \in H^1([0, +\infty))$ for every unbounded edge $\mathrm{e}$, and $u$ is continuous on $\cG$ (in particular, if a vertex $\mathrm{v}$ belongs to two or more edges $\mathrm{e}_i$, the corresponding functions $u_{\mathrm{e}_i}$ take the same value on $\mathrm{v}$); the norm in $H^1(\cG)$ is naturally defined as
\[
\|u\|_{H^1(\cG)}^2 = \sum_{\mathrm{e}} \|u_{\mathrm{e}}'\|_{L^2(\mathrm{e})}^2 + \|u_{\mathrm{e}}\|_{L^2(\mathrm{e})}^2.
\]

We aim at proving the existence of non-constant critical points of the energy $E(\cdot\,, \cG)$, defined in \eqref{1.1}, constrained on the $L^2$-sphere
\[
H_{\mu}^1(\mathcal{G}):= \left\{u\in H^1(\mathcal{G}): \int_{\mathcal{G}}|u|^2\,dx=\mu\right\}.
\]
If $u\in H^1_{\mu}(\mathcal{G})$ is such a critical point, then there exists a Lagrange multiplier $\lambda\in \mathbb{R}$ such that $u$ satisfies the following problem:
\begin{equation}\label{1.2}
\begin{cases}
-u''+\lambda u=|u|^{p-2}u~~&\mbox{for every edge}~ {\rm e}\in \mathcal{E},\\
\sum\limits_{\edge \succ {\rm v}}u_{\edge}'({\rm v})=0~~&\mbox{at every vertex}~{\rm v} \in \mathcal{V},
\end{cases}
\end{equation}
where $\edge \succ {\rm v}$ means that the edge $\edge$ is incident at ${\rm v}$, and the derivative $u'_{\edge}(v)$ is always an outer derivative. The second equation is the so called \emph{Kirchhoff condition}. Notice that the positive constant function $\kappa_\mu= (\mu/\ell)^{1/2}$ trivially satisfies \eqref{1.2}, for $\lambda = (\mu/\ell)^{(p-2)/2}$.

Our main existence result is as follows.
\begin{theorem}\label{thm: main ex}
Let $\cG$ be any compact metric graph, and $p>6$. There exists $\mu_1>0$ depending on $\cG$ and on $p$ such that, for any $0<\mu<\mu_1$, problem \eqref{1.2} with the mass constraint \eqref{mass const} has a positive non-constant solution which corresponds to a mountain pass critical point of $E(\cdot\,,\mathcal{G})$ on $H_{\mu}^1(\mathcal{G})$, at a strictly larger energy level than $\kappa_\mu$.
\end{theorem}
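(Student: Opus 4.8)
The plan is to realize the non-constant solution as a mountain pass point of the constrained functional $E(\cdot\,,\cG)|_{H^1_\mu(\cG)}$, with the constant $\kappa_\mu$ playing the role of the local minimum from which the mountain pass geometry departs. First I would establish that, for $\mu$ small, $\kappa_\mu$ is a strict local minimizer of $E(\cdot\,,\cG)$ on $H^1_\mu(\cG)$: writing $u = \kappa_\mu + v$ with $\int_\cG v\,dx$ controlled by the constraint, the quadratic part $\frac12\int_\cG|v'|^2$ dominates the perturbation coming from $-\frac1p\int_\cG|u|^p$ on a small $H^1$-ball, using the compact Sobolev embedding $H^1(\cG)\hookrightarrow L^\infty(\cG)$ and the smallness of $\mu$ (equivalently of $\kappa_\mu$) to absorb the nonlinear terms; one should also check the energy gap, i.e.\ that nearby competitors have energy $\ge E(\kappa_\mu,\cG)+\delta$ on a small sphere around $\kappa_\mu$. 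Second, since $p>6$, for any fixed nonconstant profile the scaling/stretching $u\mapsto u_t$ that preserves the mass drives $E\to-\infty$, so there exists a path in $H^1_\mu(\cG)$ from $\kappa_\mu$ to a point of very negative energy; this gives the mountain pass geometry and a candidate min-max level $c_\mu > E(\kappa_\mu,\cG)$.

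The heart of the matter is compactness: on the $L^2$-constraint the functional does not satisfy Palais–Smale in general because $\lambda$ is a free parameter and could a priori degenerate, and because along a PS sequence one might lose mass or blow up. Here I would invoke the variational principle advertised in the abstract — the monotonicity trick of Jeanjean combined with a min-max theorem carrying second-order (Morse index) information. Concretely, one embeds the problem in a family $E_\rho$ (or rescales the mass) so that for almost every parameter one obtains a bounded PS sequence at the mountain pass level which, in addition, has Morse index bounded by the dimension of the min-max class (here essentially $1$). The boundedness of the PS sequence in $H^1(\cG)$ is automatic on a compact graph once one controls the $L^2$ norm and the energy, because then $\int_\cG|u'|^2$ is bounded via the energy identity together with the Nehari-type relation coming from $\lambda$; and on a compact graph boundedness in $H^1$ gives strong convergence in $L^p$ by compact embedding, so the PS sequence converges and produces a genuine critical point $u_\mu$ at level $c_\mu$ with Morse index $\le 1$, for a sequence of parameters; a further limiting argument recovers a solution at the original mass.

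It then remains to show the solution so obtained is \emph{non-constant} and positive. Positivity follows by the standard trick of working with $|u|$ (the functional and constraint are even, and the constant path can be taken nonnegative) and then the strong maximum principle / Harnack on each edge together with the Kirchhoff condition. Non-constancy is where the blow-up analysis enters: one argues by contradiction, supposing that for a sequence $\mu_n\to0$ the only solutions produced are the constants $\kappa_{\mu_n}$; but the constant has Morse index computable explicitly (linearize \eqref{1.2} at $\kappa_\mu$: the linearized operator is $-\frac{d^2}{dx^2} + \lambda - (p-1)\kappa_\mu^{p-2}$ with Kirchhoff conditions, whose negative eigenvalues on the mass-constrained tangent space one counts), and for $\mu$ small this Morse index is either $0$ or grows, in any case incompatible with the strict inequality $c_\mu > E(\kappa_\mu,\cG)$ — a mountain-pass-type critical point sitting strictly above a strict local minimum cannot coincide with that minimum. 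Alternatively, and more robustly, one uses the blow-up analysis of bound states with prescribed mass and bounded Morse index: if $u_{\mu_n}$ were to concentrate, a rescaling converges to a finite-Morse-index solution of $-w''=|w|^{p-2}w$ on $\R$ or on a half-line, which for $p>6$ and bounded index forces a contradiction with the mass being $\mu_n\to0$ unless $u_{\mu_n}$ stays bounded away from the constants. The main obstacle, and the step requiring the most care, is precisely this last point — ruling out that the min-max procedure merely recovers $\kappa_\mu$ (or a degenerate/concentrating object) — which is why the second-order information from the refined min-max theorem, feeding into the blow-up classification, is essential rather than cosmetic.
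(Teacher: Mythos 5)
Your skeleton matches the paper's strategy (local minimality of $\kappa_\mu$ for small $\mu$, mountain pass geometry from mass-preserving scalings since $p>6$, the monotonicity trick with second-order information to get approximate critical points with bounded Morse index, and a blow-up analysis), but there is a genuine gap at the decisive step, and you have misattributed the role of the blow-up analysis. First, your claim that boundedness of the Palais--Smale sequence ``is automatic on a compact graph once one controls the $L^2$ norm and the energy'' is false and begs the question: the Nehari-type identity gives
\begin{equation*}
\Bigl(\tfrac12-\tfrac1p\Bigr)\int_{\cG}|u_n'|^2\,dx = c_n + \frac{\lambda_n\mu}{p},
\end{equation*}
so the $H^1$ bound is equivalent to a bound on the Lagrange multipliers $\lambda_n$, which is precisely what must be proved. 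The monotonicity trick only yields bounded sequences (hence genuine critical points $u_\rho$ of $E_\rho$) for \emph{almost every} $\rho\in[1/2,1]$; the entire difficulty is the ``further limiting argument'' you dismiss in one clause, namely showing that the family $\{u_{\rho_n}\}$, $\rho_n\to1^-$, stays bounded. This is where the blow-up analysis is used: assuming $\lambda_n\to+\infty$, the solutions with Morse index $\le 2$ concentrate at finitely many peaks, each carrying mass of order $\lambda_n^{\frac{2}{p-2}-\frac12}$ after rescaling, with exponential decay away from the peaks and the vertexes; since $p>6$ the exponent $\frac12-\frac{2}{p-2}$ is positive, so $\lambda_n^{\frac12-\frac{2}{p-2}}\int_{\cG}u_n^2\,dx=\lambda_n^{\frac12-\frac{2}{p-2}}\mu\to+\infty$, contradicting the uniform bound furnished by the peak decomposition. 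This mass-counting contradiction — the only place where $p>6$ and the fixed mass $\mu$ interact quantitatively — is absent from your proposal.

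Second, non-constancy does not require any of this machinery: the mountain pass level satisfies $c>E(\kappa_\mu,\cG)$ by construction (any admissible path must cross the sphere $\partial B(\kappa_\mu,r)$ on which the energy exceeds $E(\kappa_\mu,\cG)$), so the critical point found at level $c$ cannot be $\kappa_\mu$. Your contradiction argument along a sequence $\mu_n\to0$ addresses a statement different from the theorem, which is for each fixed $\mu\in(0,\mu_1)$. A minor further point: the constrained Morse index bound from the min-max theorem is $1$, which translates into Morse index at most $2$ for $u_\rho$ as a solution of the ODE system (one extra dimension from the codimension of the constraint); this is the bound $\bar k=2$ actually fed into the blow-up theorems.
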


\begin{remark}
Note that the Lagrange multiplier associated with any positive solution $u$ to \eqref{1.2} is positive. Indeed, by standard arguments, we know that $u \in C^2(\edge)$ on every edge. Then, integrating the first equation in \eqref{1.2} on every edge, summing over the edges and making use of the Kirchhoff condition, we obtain
\[
\lambda \|u\|_{L^1(\cG)} = \|u\|_{L^{p-1}(\cG)}^{p-1},
\]
whence we deduce that $\lambda >0$.
\end{remark}

\begin{remark}
The theorem is not a perturbation result, in the sense that the value $\mu_1$ will not be obtained by any limit process, and can be explicitly estimated. We refer to Proposition \ref{constant-solution} and Remark \ref{rem: on mu_1} for more details.

On the other hand, one may wonder whether or not the restriction $\mu<\mu_1$ can be removed. This is an open problem, our min-max approach fails for large masses. Observing that our solutions will have Morse index at most $2$ as critical points of the associated action functional (see Section \ref{sec: appr}), another related issue could be to investigate if it is possible to find solutions of \eqref{1.2}, possibly non-positive, with any mass $\mu>0$ and Morse index bounded by $2$. For the NLS equations with Dirichlet conditions in bounded Euclidean domains, this question has a negative answer, see \cite[Theorem 1.2]{PieVer}. Even if the two problems are not equivalent, this result suggests that a bound of type $\mu<\mu_1$ may be necessary.
\end{remark}

The proof of Theorem \ref{thm: main ex} is divided into some intermediate steps. At first, in Section \ref{sec: min}, we observe the local minimality of the constant solution for $\mu<\mu_1$, following \cite{CDS}.

Since in addition $E(\cdot\,,\cG)$ is unbounded from below, as $p>6$, this naturally suggests the possible existence of a second critical point, of mountain pass type. However, we have to face some severe compactness issues, and, in particular, the existence of a \emph{bounded} Palais-Smale sequence at the mountain pass level is not straightforward. We point out that the techniques based on scalings, usually employed in the Euclidean setting and related to the validity of a Pohozaev identity (see \cite{J} or \cite{BaSo, IkNo}), do not work, since $\cG$ is not scale invariant. To overcome this obstruction, a first natural attempt is to adapt the monotonicity trick of \cite{J-PRSE1999}: we first introduce a family of functionals $E_\rho(\cdot\,,\cG): H^1_\mu(\cG) \to \R$ defined by
\[
E_\rho(u,\cG) = \frac{1}{2}\int_{\mathcal{G}}|u'|^2\, dx-\frac{\rho}{p}\int_{\mathcal{G}}|u|^p\, dx, \quad \rho \in \left[\frac12,1\right].
\]
 Exploiting the monotonicity of $E_\rho(u, \cG)$ with respect to $\rho$, we can easily show that $E_\rho(\cdot\,, \cG)|_{H^1_\mu(\cG)}$ has a bounded Palais-Smale sequence of mountain pass type, for almost every $\rho \in [1/2,1]$. Since $\cG$ is compact, this ensures the existence of a critical point $u_\rho$ of $E_\rho(u, \cG)$, for almost every $\rho \in [1/2,1]$. Now the idea is to take the limit of $\{u_{\rho_n}\}$ along a sequence $\rho_n \to 1^-$, in order to obtain a critical point of the original functional. However, once again the boundedness of $\{u_{\rho_n}\}$ is an issue. In order to gain compactness, we use a general principle which combines the monotonicity trick, as presented in \cite{J-PRSE1999}, and the mountain pass theorem with second order information for constrained functionals, Theorem \ref{thm: monot trick second order} below (this is \cite[Theorem 1]{BoChJeSo}). A similar result was recently proved in \cite{BeRu,LoMaRu} in the unconstrained setting.

Mountain pass or min-max theorems with second order information have been introduced in \cite{FG1, FG-1994}. The second order information turns out to be extremely useful in proving the compactness of Palais-Smale sequences when the problem is not scale-invariant (and hence a Pohozaev identity is not available). 


With Theorem \ref{thm: monot trick second order}, we prove the existence of a sequence $\{u_{\rho_n}\}$ critical points for $E_\rho(\cdot\,, \cG)|_{H^1_\mu(\cG)}$ (which in particular are solutions of approximating problems) \emph{with uniformly bounded Morse index}. In Section \ref{sec: blow-up}, we perform a detailed blow-up analysis for this type of sequences, in the spirit of \cite{EspPet} (see also \cite{PieVer}). We think that this analysis is of independent interest and, for the sake of generality, we perform it on graphs which are not necessarily compact. In Theorem \ref{thm: blow-up 1}, we characterize the blow-up behavior of solutions close to local maximum points, both when they accumulate in the interior of one edge, or when they accumulate on a vertex; in the latter case, the limit problem is
an NLS equation posed on a \emph{star-graph}, which is a new phenomenon with respect to the Euclidean case. In Theorem \ref{thm: blow-up 2}, we establish a relation between the upper bound on the Morse index and the number of maximum points of the solutions, and describe the behavior far away from them.

Afterwards, Theorems \ref{thm: blow-up 1} and \ref{thm: blow-up 2} are used in Section \ref{sec: ex mp} to finally deduce, via a contradiction argument, that also the sequence $\{u_{\rho_n}\}$ is bounded, and converges to the non-constant mountain pass solution of Theorem \ref{thm: main ex}.

\section{Local minimality of the constant solution}\label{sec: min}

Let $\kappa_{\mu}:=(\mu/\ell)^{1/2}$ with $\ell:=|\mathcal{G}|$ being the total length of the graph $\mathcal{G}$. Clearly, the constant function $\kappa_{\mu}$ is always a solution to (\ref{1.2}) in $H_{\mu}^1(\mathcal{G})$ for some $\lambda\in\mathbb{R}$, and hence a constrained critical point of $E(\cdot\,,\cG)$ on $H^1_\mu(\cG)$. Furthermore, following \cite{CDS}, we can give a variational characterization of $\kappa_{\mu}$.

\begin{proposition}\label{constant-solution}
Assume that $\mathcal{G}$ is a compact graph and $p>6$. Then there exists $\mu_1>0$ depending on $\cG$ and on $p$ such that
\begin{description}
\item[(i)]if $0 <\mu<\mu_1$, then $\kappa_{\mu}$ is a strict local minimizer of $E(u,\mathcal{G})$ in $H^1_{\mu}(\mathcal{G})$;
\item[(ii)]if $\mu>\mu_1$, then $\kappa_{\mu}$ is not a local minimizer of $E(u,\mathcal{G})$ in $H^1_{\mu}(\mathcal{G})$.
\end{description}
\end{proposition}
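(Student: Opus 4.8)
The plan is to expand the energy functional around the constant $\kappa_\mu$ along the $L^2$-sphere and reduce the local-minimality question to a spectral/Poincar\'e-type inequality on $\cG$. Write a generic element of $H^1_\mu(\cG)$ near $\kappa_\mu$ as $u=\kappa_\mu+v$; the mass constraint $\int_\cG |u|^2=\mu$ forces $2\kappa_\mu\int_\cG v + \int_\cG v^2 = 0$, so the natural decomposition is $v = w + \zeta$, where $\int_\cG w\,dx = 0$ (the "tangent" part) and $\zeta$ is the small constant correction determined by $\zeta = -\tfrac{1}{2\kappa_\mu \ell}\int_\cG v^2$, which is quadratically small in $\|w\|$. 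First I would record the Taylor expansion
\[
E(u,\cG) - E(\kappa_\mu,\cG) = \frac12 \int_\cG |w'|^2\,dx - \frac{p-1}{2}\,\kappa_\mu^{p-2}\int_\cG w^2\,dx + o\!\left(\|w\|_{H^1(\cG)}^2\right),
\]
using $|\kappa_\mu+v|^p = \kappa_\mu^p + p\kappa_\mu^{p-1} v + \tfrac{p(p-1)}{2}\kappa_\mu^{p-2}v^2 + o(v^2)$, the fact that the linear term in $v$ is cancelled by the mass constraint (since $\kappa_\mu$ is a critical point), and absorbing the contribution of $\zeta$ into the remainder because it is higher order. The remainder term must be controlled uniformly: here I would use that on a \emph{compact} graph $H^1(\cG) \hookrightarrow L^\infty(\cG)$ compactly, so $\|v\|_{L^\infty} \to 0$ as $\|v\|_{H^1}\to 0$, making the higher-order terms genuinely $o(\|w\|_{H^1}^2)$.

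Next I would analyze the quadratic form $Q(w) := \int_\cG |w'|^2\,dx - (p-1)\kappa_\mu^{p-2}\int_\cG w^2\,dx$ on the codimension-one space $\{w\in H^1(\cG): \int_\cG w\,dx=0\}$. Let $\lambda_2(\cG)>0$ denote the first nonzero eigenvalue of $-\Delta$ on $\cG$ with Kirchhoff (natural) conditions, characterized by
\[
\lambda_2(\cG) = \inf\left\{ \frac{\int_\cG |w'|^2\,dx}{\int_\cG w^2\,dx} : w\in H^1(\cG)\setminus\{0\},\ \int_\cG w\,dx = 0\right\}.
\]
Then $Q(w) \ge \bigl(\lambda_2(\cG) - (p-1)\kappa_\mu^{p-2}\bigr)\int_\cG w^2\,dx$, and since $\kappa_\mu^{p-2} = (\mu/\ell)^{(p-2)/2}$ is strictly increasing in $\mu$, there is a unique threshold
\[
\mu_1 := \ell\left(\frac{\lambda_2(\cG)}{p-1}\right)^{2/(p-2)}
\]
at which $(p-1)\kappa_{\mu_1}^{p-2} = \lambda_2(\cG)$. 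For $\mu<\mu_1$ the quadratic form is \emph{positive definite} on the tangent space (coercive: $Q(w)\ge c\|w\|_{L^2}^2$, and combined with $\int|w'|^2$ it controls $\|w\|_{H^1}^2$), which together with the $o(\|w\|^2)$ remainder gives $E(u,\cG) > E(\kappa_\mu,\cG)$ for all $u\ne\kappa_\mu$ in a small $H^1$-neighborhood — this proves (i). For (ii), when $\mu>\mu_1$ we have $(p-1)\kappa_\mu^{p-2} > \lambda_2(\cG)$, so choosing $w$ to be (close to) the eigenfunction realizing $\lambda_2(\cG)$ gives $Q(w)<0$; feeding $u = \kappa_\mu + t w + \zeta(t)$ with $t\to 0$ into the expansion yields $E(u,\cG) - E(\kappa_\mu,\cG) = \tfrac{t^2}{2}Q(w) + o(t^2) < 0$, so $\kappa_\mu$ is not a local minimizer — this proves (ii).

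The main obstacle is making the expansion rigorous with a \emph{uniform} control of the remainder on a full $H^1$-ball (not just along one direction), and in particular handling the interplay between the tangent part $w$ and the constant correction $\zeta$ in the constraint. The compact-graph embedding $H^1(\cG)\hookrightarrow L^\infty(\cG)$ is what rescues this: it lets one write $\bigl|\int_\cG(|u|^p - \kappa_\mu^p - p\kappa_\mu^{p-1}v - \tfrac{p(p-1)}{2}\kappa_\mu^{p-2}v^2)\bigr| \le C\|v\|_{L^\infty}\|v\|_{L^2}^2 = o(\|v\|_{H^1}^2)$, and similarly the contribution of $\zeta$ is $O(\|v\|_{L^2}^4)$. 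A secondary (but routine) point is verifying that $\lambda_2(\cG)>0$, which follows from connectedness of $\cG$ and compactness of the embedding (the quotient is a genuine minimum, attained, and strictly positive since the only $w$ with $\int|w'|^2=0$ are constants, excluded by $\int w=0$). This argument follows the strategy of \cite{CDS}; the value of $\mu_1$ is explicit in terms of $\ell$, $p$, and the spectral gap $\lambda_2(\cG)$, which is the content of Remark 2.3.
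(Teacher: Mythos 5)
Your overall strategy (second variation of $E$ on the sphere, Poincar\'e inequality with zero mean to get the threshold, eigenfunction of $\lambda_2(\cG)$ for the converse) is the same as the paper's, but there is a genuine error in the expansion that changes the answer. You claim that ``the linear term in $v$ is cancelled by the mass constraint'' and that the contribution of the constant correction $\zeta$ can be ``absorbed into the remainder because it is higher order.'' Neither is true at the order that matters. The constraint gives $\int_\cG v = -\tfrac{1}{2\kappa_\mu}\int_\cG v^2$, which is \emph{quadratically} small but not zero; since the linear term of the expansion of $-\tfrac1p\int|u|^p$ is $-\kappa_\mu^{p-1}\int_\cG v$, it feeds back a \emph{second-order} contribution $+\tfrac{\kappa_\mu^{p-2}}{2}\int_\cG v^2$. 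Adding this to your $-\tfrac{p-1}{2}\kappa_\mu^{p-2}\int_\cG v^2$ gives
\[
E(u,\cG)-E(\kappa_\mu,\cG)=\frac12\int_\cG |w'|^2\,dx-\frac{p-2}{2}\,\kappa_\mu^{p-2}\int_\cG w^2\,dx+o\!\left(\|w\|_{H^1}^2\right),
\]
i.e.\ the correct coefficient is $(p-2)$, not $(p-1)$. Equivalently: the constrained Hessian is $E''(\kappa_\mu)+\lambda\,(\cdot,\cdot)_{L^2}$ with Lagrange multiplier $\lambda=\kappa_\mu^{p-2}$, which is exactly the correction you dropped; this is the formula \eqref{2-23-1} the paper imports from \cite{CDS}.

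The consequence is not cosmetic. Your threshold $\mu_1=\ell\left(\lambda_2(\cG)/(p-1)\right)^{2/(p-2)}$ is strictly smaller than the correct one $\mu_1=\ell\left(\lambda_2(\cG)/(p-2)\right)^{2/(p-2)}$. Part (i) of your argument then proves local minimality only on a strictly smaller interval (true but suboptimal), while part (ii) becomes \emph{false} for $\mu$ in the gap between the two thresholds: there the true second variation in the direction of the eigenfunction $\phi_2$ equals $[\lambda_2(\cG)-(p-2)\kappa_\mu^{p-2}]\int_\cG\phi_2^2>0$, so that direction does not decrease the energy and $\kappa_\mu$ is in fact still a strict local minimizer. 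Since the proposition asserts a sharp dichotomy at a single $\mu_1$, the proof as written does not establish the statement. The rest of your argument (the H\"older/remainder control via $H^1(\cG)\hookrightarrow L^\infty(\cG)$, the coercivity of the quadratic form below threshold, which the paper handles by splitting off a $(1-\beta)\int|\phi'|^2$ piece) is sound once the coefficient is corrected.
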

\begin{proof}
To characterize the variational properties of $\kappa_{\mu}$, we shall evaluate the sign of the quadratic form $\varphi \in T_{\kappa_\mu} H^1_\mu(\cG) \mapsto d^2|_{H^1_\mu(\cG)} E(\kappa_{\mu},\mathcal{G})[\varphi, \varphi]$, where $d^2|_{H^1_\mu(\cG)} E(u, \cG)$ denotes the constrained Hessian of $E(\cdot\,\cG)$ on $H^1_\mu(\cG)$ and $T_{\kappa_{\mu}}H_{\mu}^1(\mathcal{G})$ is the tangent space of $H_{\mu}^1(\mathcal{G})$ at $\kappa_{\mu}$, defined as follows:
\begin{eqnarray*}
T_{\kappa_{\mu}}H_{\mu}^1(\mathcal{G}):=\left\{\phi \in H^1(\mathcal{G}): \int_{\mathcal{G}}\phi \, dx=0\right\}.
\end{eqnarray*}
From \cite[Proposition 4.1]{CDS}, which remains valid with the same proof for $p>6$ we obtain
\begin{eqnarray}\label{2-23-1}
d^2|_{H^1_\mu(\cG)} E (\kappa_{\mu},\mathcal{G})[\phi,\phi]=\int_{\mathcal{G}}|\phi'|^2\,dx-(p-2)\kappa_{\mu}^{p-2}\int_{\mathcal{G}}|\phi|^2\, dx, \qquad \forall \phi\in T_{\kappa_{\mu}}H_{\mu}^1(\mathcal{G}).
\end{eqnarray}

Denote now by $\lambda_2(\cG)$ the smallest positive eigenvalue of the Kirchhoff Laplacian on $\cG$ (that is $-(\cdot)''$ on $\cG$, coupled with the Kirchhoff condition at the vertexes), namely
%
%
$$\lambda_2(\mathcal{G}) = \inf_{\phi \in H^1(\mathcal{G}), \int_{\mathcal{G}} \phi \, dx = 0} \frac{\int_{\mathcal{G}} |\phi '|^2 dx}{\int_{\mathcal{G}} |\phi|^2 dx}.$$
Let us suppose that  $0 <\mu<\mu_1$, where
\begin{equation}\label{defmu}
\mu_1 := \ell \, \Big( \frac{\lambda_2(\mathcal{G})}{p-2}\Big)^{\frac{2}{p-2}},
\end{equation}
and let $\beta \in (0,1)$ be such that
\begin{equation}\label{choose alpha}
\beta \lambda_2(\cG) -(p-2) \left(\frac{\mu}{\ell}\right)^{\frac{p-2}{2}}>0.
\end{equation}
In view of (\ref{2-23-1}), it follows that
\begin{eqnarray*}
d^2|_{H^1_\mu(\cG)}  E(\kappa_{\mu},\mathcal{G})[\phi,\phi] \ge (1-\beta) \int_{\cG} |\phi'|^2 +  [\beta \lambda_2(\mathcal{G})-(p-2)\kappa_{\mu}^{p-2}]\int_{\mathcal{G}}|\phi|^2dx
\end{eqnarray*}
for every $\phi\in T_{\kappa_{\mu}}H_{\mu}^1(\mathcal{G})$, which implies that $d^2|_{H^1_\mu(\cG)}  E(\kappa_{\mu},\mathcal{G})$ is positive definite whenever
$0 <\mu<\mu_1$. Hence, for any such $\mu$, the constant $\kappa_\mu$ is a strict local minimizer of $E(\cdot\,,\cG)$ on $H^1_\mu(\cG)$.
\smallskip

If instead $\mu>\mu_1$, taking an eigenfunction $\phi_2$ corresponding to $\lambda_2(\mathcal{G})$, we obtain
\begin{eqnarray*}
d^2|_{H^1_\mu(\cG)} E(\kappa_\mu, \cG)[\phi_2, \phi_2]=[\lambda_2(\mathcal{G})-(p-2)\kappa_{\mu}^{p-2}]\int_{\mathcal{G}}|\phi_2|^2dx<0,
\end{eqnarray*}
which implies that $\kappa_{\mu}$ is not a local minimizer of $E(u,\mathcal{G})$ in $H_{\mu}^1(\mathcal{G})$.
\end{proof}

\begin{remark}\label{rem: on mu_1}
By \cite[Theorem 1]{F-2005}, we have $\lambda_2(\mathcal{G}) \geq \pi^2/\ell^2$.
Then by \eqref{defmu} it follows that
$$\mu_1 \geq \ell^{\frac{p-6}{p-2}} \Big(\frac{\pi^2}{p-2}\Big)^{\frac{2}{p-2}}.$$
In particular, $\mu_1\to +\infty$ as $\ell \to+\infty.$
\end{remark}

\section{Mountain pass solutions for approximating problems}\label{sec: appr}

When $\kappa_{\mu}$ is a local minimizer of the energy, and since the energy is unbounded from below on $H^1_\mu(\cG)$ in the supercritical regime, one may consider the question of finding a non-constant solution of mountain pass (MP) type. The existence of a MP solution will be the content of this and the next two sections. Before proceeding, it is convenient to recall a preliminary result and a definition.

\begin{lemma}[Proposition 3.1 in \cite{D-JDE2018}]\label{Embedding}
Assume that $\mathcal{G}$ is a compact graph and $\{u_n\}\subset H_{\mu}^1(\mathcal{G})$ is a bounded Palais-Smale sequence of $E(\cdot\,,\cG)$ constrained on $H_{\mu}^1(\mathcal{G})$. Then there exists $u\in H^1(\mathcal{G})$ such that, up to a subsequence, $u_n \to u$ strongly in $H_{\mu}^1(\mathcal{G})$.
\end{lemma}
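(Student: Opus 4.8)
The plan is to run the standard compactness argument, exploiting crucially that a compact metric graph $\cG$ has finite total length and only finitely many (bounded) edges, so that $H^1(\cG)$ embeds \emph{compactly} into $L^q(\cG)$ for every $q\in[1,\infty)$ and into $C(\cG)$. This in turn follows at once from the one-dimensional Rellich theorem applied separately on each edge $I_\edge$ (recall $H^1([0,\ell_\edge])\hookrightarrow\hookrightarrow C([0,\ell_\edge])$), together with the finiteness of $\mathcal{E}$.

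\textbf{Step 1 (weak limit and preservation of the constraint).} Since $\{u_n\}$ is bounded in $H^1(\cG)$, up to a subsequence $u_n\rightharpoonup u$ weakly in $H^1(\cG)$, $u_n\to u$ strongly in $L^q(\cG)$ for all $q\in[1,\infty)$, and $u_n\to u$ uniformly on $\cG$. In particular $\int_\cG|u|^2\,dx=\lim_n\int_\cG|u_n|^2\,dx=\mu$, so $u\in H^1_\mu(\cG)$ and the mass constraint passes to the limit. \textbf{Step 2 (bounded Lagrange multipliers).} By definition of constrained Palais--Smale sequence, $E'(u_n,\cG)-\lambda_n u_n\to 0$ in $(H^1(\cG))^*$, with normalization factor $\lambda_n=\mu^{-1}\langle E'(u_n,\cG),u_n\rangle=\mu^{-1}\bigl(\int_\cG|u_n'|^2\,dx-\int_\cG|u_n|^p\,dx\bigr)$; boundedness of $\{u_n\}$ in $H^1(\cG)$, hence in $L^p(\cG)$, yields that $\{\lambda_n\}$ is bounded in $\R$, so along a further subsequence $\lambda_n\to\lambda$.

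\textbf{Step 3 (strong convergence).} Testing the relation $E'(u_n,\cG)-\lambda_n u_n\to0$ against the bounded test sequence $\varphi=u_n-u\in H^1(\cG)$ gives
\[
\int_\cG u_n'(u_n-u)'\,dx-\int_\cG|u_n|^{p-2}u_n(u_n-u)\,dx-\lambda_n\int_\cG u_n(u_n-u)\,dx=o(1).
\]
The uniform bound on $\|u_n\|_{L^\infty(\cG)}$ together with $u_n\to u$ in $L^1(\cG)$ forces the second and third integrals to tend to $0$, while $\int_\cG u'(u_n-u)'\,dx\to0$ because $u_n\rightharpoonup u$ in $H^1(\cG)$ and $u'\in L^2(\cG)$. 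Since $\int_\cG u_n'(u_n-u)'\,dx=\int_\cG|(u_n-u)'|^2\,dx+\int_\cG u'(u_n-u)'\,dx$, we deduce $\int_\cG|(u_n-u)'|^2\,dx\to0$. Combined with $u_n\to u$ in $L^2(\cG)$, this gives $\|u_n-u\|_{H^1(\cG)}\to0$, i.e. $u_n\to u$ strongly in $H^1_\mu(\cG)$, which is the claim.

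I do not expect a genuine obstacle in this proof: the compactness of $\cG$ removes the loss of mass at infinity that makes such a statement delicate on non-compact graphs or in $\R^N$. The only points deserving a little attention are the edge-by-edge compact embedding and the boundedness of the Lagrange multipliers, both of which are immediate once one uses that $\cG$ has finitely many bounded edges and $\{u_n\}$ is $H^1$-bounded. Note finally that one does not need $u$ to be a critical point in order to conclude (although it is one, by passing to the limit in Step 2); the argument may be stopped as soon as strong convergence is established.
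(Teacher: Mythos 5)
Your argument is correct and is the standard one. Note that the paper does not prove this lemma at all --- it simply imports it as Proposition 3.1 of \cite{D-JDE2018} --- and your proof (compact embedding of $H^1(\cG)$ into $L^q(\cG)$ and $C(\cG)$ edge by edge, boundedness of the Lagrange multipliers $\lambda_n=\mu^{-1}\langle E'(u_n),u_n\rangle$, then testing $E'(u_n)-\lambda_n u_n\to 0$ against $u_n-u$) is exactly the expected route, so there is nothing to flag.
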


\begin{definition}\label{def: morse}
For any graph $\mathcal{F}$ (not necessarily compact) and any solution $U \in C(\mathcal{F}) \cap H^1_{{\rm loc}}(\mathcal{F})$, not necessarily in $H^1(\mathcal{F})$, of
\begin{equation}\label{eq U}
\begin{cases}
-U'' + \lambda U = \rho |U|^{p-2} U & \text{in $\mathcal{F}$}, \\
\sum_{{\rm e} \succ {\rm v}} U'({\rm v}) = 0 &  \text{for any vertex ${\rm v}$ of $\mathcal{F}$},
\end{cases}
\end{equation}
with $\lambda, \rho \in \R$, we consider
\begin{equation}\label{second differential}
Q(\varphi;U, \mathcal{F}):= \int_{\mathcal{F}} \left(|\varphi'|^2 + (\lambda-(p-1)\rho|U|^{p-2}) \varphi^2\right)\,dx, \quad \forall \varphi \in H^1(\mathcal{F}) \cap C_c(\mathcal{F}).
\end{equation}
The \emph{Morse index of $U$}, denoted by $m(U)$, is the maximal dimension of a subspace $W \subset H^1(\mathcal{F}) \cap C_c(\mathcal{F})$ such that $Q(\varphi; U, \mathcal{F})<0$ for all $\varphi \in W \setminus \{0\}$.
\end{definition}

Note that this is the definition of Morse index as solution to \eqref{eq U}, and not as critical point of the energy functional under the $L^2$ constraint (see Definition \ref{def: app morse} below).

\medskip

Lemma \ref{Embedding} is a useful result which exploits the compactness of the reference graph $\cG$. However, as already anticipated in the introduction, in the present setting even the existence of a \emph{bounded} Palais-Smale sequence at the mountain pass level is not straightforward. To overcome this issue, we introduce the family of functionals
\[
E_\rho(u, \cG) = \frac12 \int_{\cG} |u'|^2\,dx - \frac{\rho}p\int_{\cG} |u|^p\,dx,
\]
depending on the parameter $\rho \in [1/2,1]$. The idea is to adapt the monotonicity trick \cite{J-PRSE1999} on this family.

The main result of this section is the following:
\begin{proposition}\label{prop: ex for ae rho}
Let $\mu \in (0, \mu_1)$. For almost every $\rho \in [1/2,1]$, there exists a critical point $u_\rho$ of $E_\rho(\cdot,\cG)$ on $H^1_\mu(\cG)$, at level $c_{\rho}>E_\rho(\kappa_\mu, \cG)$, which solves
\begin{equation}\label{pb rho}
\begin{cases}
-u_\rho'' + \lambda_\rho u_\rho = \rho u_\rho^{p-1}, \quad u_\rho>0 & \text{in $\mathcal{\cG}$}, \\
\sum_{{\rm e} \succ {\rm v}} u_\rho'({\rm v}) = 0 &  \text{for any vertex ${\rm v}$},
\end{cases}
\end{equation}
for some $\lambda_\rho >0$. Moreover, its Morse index satisfies $m(u_\rho) \le 2$.
\end{proposition}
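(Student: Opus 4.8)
The plan is to apply the abstract monotonicity trick with second order information, Theorem~\ref{thm: monot trick second order} (that is, \cite[Theorem~1]{BoChJeSo}), to the one-parameter family $\{E_\rho(\cdot\,,\cG)|_{H^1_\mu(\cG)}\}_{\rho\in[1/2,1]}$, and then to exploit the compactness of $\cG$. The first task is to produce a mountain pass geometry valid for every $\rho\in[1/2,1]$. Repeating the computation behind Proposition~\ref{constant-solution} gives
\[
d^2|_{H^1_\mu(\cG)}E_\rho(\kappa_\mu,\cG)[\phi,\phi]=\int_\cG|\phi'|^2\,dx-(p-2)\rho\,\kappa_\mu^{p-2}\int_\cG|\phi|^2\,dx ,
\]
and since $\rho\le1$ and $0<\mu<\mu_1$ this quadratic form is positive definite with a gap as in \eqref{choose alpha}; a Taylor expansion of the constrained functional (with error controlled uniformly in $\rho\in[1/2,1]$, since $\rho$ enters only as a prefactor there) produces $r,\delta>0$, independent of $\rho$, with $E_\rho(u,\cG)\ge E_\rho(\kappa_\mu,\cG)+\delta$ whenever $\|u-\kappa_\mu\|_{H^1(\cG)}=r$. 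For a point of low energy far from $\kappa_\mu$, fix one edge $\edge_0$, a nonnegative $\psi\in C_c^\infty((0,1))$ with $\|\psi\|_{L^2}=1$, and set $w_n:=\sqrt{\mu}\,\psi_n$, where $\psi_n(x):=n^{1/2}\psi(nx)$ on $\edge_0$ and $\psi_n\equiv0$ on $\cG\setminus\edge_0$; then $w_n\in H^1_\mu(\cG)$ (for $n$ large, so that $\supp\psi_n\subset\edge_0$), $w_n\ge0$, $\|w_n-\kappa_\mu\|_{H^1(\cG)}\to\infty$, and
\[
E_\rho(w_n,\cG)\le E_{1/2}(w_n,\cG)=\frac{\mu}{2}\,n^2\|\psi'\|_{L^2}^2-\frac{\mu^{p/2}}{2p}\,n^{p/2-1}\|\psi\|_{L^p}^p\longrightarrow-\infty
\]
since $p>6$ forces $p/2-1>2$. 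Hence there is $\bar n$ with $\|w_{\bar n}-\kappa_\mu\|_{H^1(\cG)}>r$ and $E_\rho(w_{\bar n},\cG)<E_\rho(\kappa_\mu,\cG)$ for all $\rho\in[1/2,1]$, so with $\Gamma:=\{\gamma\in C([0,1],H^1_\mu(\cG)):\gamma(0)=\kappa_\mu,\ \gamma(1)=w_{\bar n}\}$ the mountain pass level $c_\rho:=\inf_{\gamma\in\Gamma}\max_{t\in[0,1]}E_\rho(\gamma(t),\cG)$ is finite and obeys $c_\rho\ge E_\rho(\kappa_\mu,\cG)+\delta>E_\rho(\kappa_\mu,\cG)$.

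Since $\rho\mapsto E_\rho(u,\cG)$ is non-increasing for each fixed $u$, the map $\rho\mapsto c_\rho$ is non-increasing, hence differentiable at a.e.\ $\rho\in[1/2,1]$. At every such $\rho$ I would invoke Theorem~\ref{thm: monot trick second order} to obtain a \emph{bounded} Palais--Smale sequence $\{v_k\}\subset H^1_\mu(\cG)$ for $E_\rho(\cdot\,,\cG)|_{H^1_\mu(\cG)}$ at the level $c_\rho$, carrying the additional second-order information that its constrained Morse index (in the sense of Definition~\ref{def: app morse}) is asymptotically at most $1$, the dimension of the min-max class. Replacing $\gamma$ by $|\gamma|$ in $\Gamma$ (admissible, since $u\mapsto|u|$ is continuous on $H^1(\cG)$, $\kappa_\mu,w_{\bar n}\ge0$, and $E_\rho$ is unchanged under $u\mapsto|u|$) allows us to take $v_k\ge0$.

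Because $\cG$ is compact, $H^1(\cG)\hookrightarrow L^q(\cG)$ is compact for every $q$, so the proof of Lemma~\ref{Embedding} applies verbatim to $E_\rho(\cdot\,,\cG)$: up to a subsequence $v_k\to u_\rho$ strongly in $H^1_\mu(\cG)$, with $u_\rho\ge0$ and $E_\rho(u_\rho,\cG)=c_\rho>E_\rho(\kappa_\mu,\cG)$. In particular $u_\rho$ is a constrained critical point, so there is $\lambda_\rho\in\R$ with $-u_\rho''+\lambda_\rho u_\rho=\rho u_\rho^{p-1}$ on each edge, together with the Kirchhoff condition; as $\|u_\rho\|_{L^2(\cG)}^2=\mu$, we have $u_\rho\not\equiv0$, and the strong maximum principle on each edge combined with the Kirchhoff condition gives $u_\rho>0$ on $\cG$, while integrating the equation over $\cG$ yields $\lambda_\rho\|u_\rho\|_{L^1(\cG)}=\rho\|u_\rho\|_{L^{p-1}(\cG)}^{p-1}>0$, hence $\lambda_\rho>0$. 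Finally, the strong convergence $v_k\to u_\rho$ in $H^1(\cG)$ (hence in $L^\infty(\cG)$, by the one-dimensional Sobolev embedding on the compact graph) makes the constrained Hessians $d^2|_{H^1_\mu(\cG)}E_\rho(v_k,\cG)$ converge to $d^2|_{H^1_\mu(\cG)}E_\rho(u_\rho,\cG)$, so the asymptotic bound on the constrained Morse index of $\{v_k\}$ transfers to $u_\rho$; since the tangent space $T_{u_\rho}H^1_\mu(\cG)$ has codimension $1$ in $H^1(\cG)$, the Morse index of Definition~\ref{def: morse} exceeds the constrained one by at most $1$, and therefore $m(u_\rho)\le2$.

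The main obstacle is twofold. First, one has to check that the family $\{E_\rho(\cdot\,,\cG)|_{H^1_\mu(\cG)}\}$ satisfies the structural hypotheses of Theorem~\ref{thm: monot trick second order}: enough regularity of $E_\rho$ on the manifold $H^1_\mu(\cG)$, together with the mountain pass geometry uniformly in $\rho$ established above. Second, and more delicate, one must turn the merely \emph{asymptotic} second-order information on the Palais--Smale sequence $\{v_k\}$ into a genuine Morse index bound for the limit $u_\rho$, keeping precise track of how the index changes when passing between the constraint $H^1_\mu(\cG)$ and the equation~\eqref{eq U} (the ``$+1$'' coming from the Lagrange multiplier / mass direction). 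The remaining ingredients — the concentrating-bump estimate, the maximum principle, and the sign of $\lambda_\rho$ — are routine.
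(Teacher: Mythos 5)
Your proposal follows essentially the same route as the paper: uniform-in-$\rho$ mountain pass geometry built from the strict local minimality of $\kappa_\mu$ and a concentrating bump supported in a single edge, Theorem~\ref{thm: monot trick second order} to produce a bounded nonnegative Palais--Smale sequence with approximate constrained Morse index at most $1$, strong convergence via Lemma~\ref{Embedding}, positivity via the Kirchhoff condition and the maximum principle, and transfer of the second-order bound to the limit with the extra $+1$ coming from the codimension of the constraint. The differences are only presentational (the paper lets the radius $r_\rho$ depend on $\rho$, and spells out the $\beta$--$\delta_1$ persistence argument for the Hessian that you compress into ``the constrained Hessians converge''), so the argument is correct.
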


In the proof of the proposition, the value of $\mu \in (0, \mu_1)$ is fixed and will not change. As a first step, we show that the family of functionals $E_\rho(\cdot\,,\cG)$ has a mountain pass geometry on $H^1_\mu(\cG)$ around the constant local minimizer $\kappa_{\mu}$, uniformly with respect to $\rho$.

\begin{lemma}\label{Lem2-20-1}
There exists $w \in H^1_\mu(\cG)$ such that, setting
\[
\Gamma:= \left\{ \gamma \in C([0,1], H^1_\mu(\cG)): \ \gamma(0) = \kappa_\mu, \ \gamma(1) = w\right\},
\]
we have that
\[
c_\rho:= \inf_{\gamma \in \Gamma} \max_{t \in [0,1]} E_\rho(\gamma(t), \cG) > E_\rho(\kappa_\mu, \cG) = \max\{E_\rho(\gamma(0), \cG), E_\rho(\gamma(1), \cG) \}, \quad \forall \rho \in \left[\frac12,1\right].
\]
\end{lemma}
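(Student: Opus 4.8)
The plan is to establish the uniform mountain pass geometry by exhibiting a single endpoint $w$ that lies in a "low-energy" region and works for all $\rho \in [1/2,1]$ simultaneously. First I would use the fact that $E(\cdot\,,\cG)$ is unbounded from below on $H^1_\mu(\cG)$ because $p>6$: concretely, one picks a fixed nonconstant $v_0 \in H^1_\mu(\cG)$ and considers a suitable rescaling-type path. Since $\cG$ is not scale invariant, I cannot use the usual dilation $v(\cdot/t)$; instead I would use a "concentration" deformation inside one edge, say take a bump function supported in a small interval of a fixed edge $\edge_0$, rescaled so as to keep the $L^2$-mass equal to $\mu$ while letting the $L^p$-norm blow up. That is, on an edge identified with $[0,\ell_{\edge_0}]$ set $w_t(x) = t^{1/2}\,\psi(tx)$ on $[0,1/t]$ (extended by a constant, or glued continuously to keep mass $\mu$, with the remaining mass spread thinly over the rest of $\cG$). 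Then $\|w_t'\|_{L^2}^2 \sim t^2$ while $\|w_t\|_{L^p}^p \sim t^{(p-2)/2}$, and since $p>6$ gives $(p-2)/2 > 2$, the nonlinear term dominates, so $E_\rho(w_t,\cG) \to -\infty$ as $t\to\infty$, uniformly in $\rho \in [1/2,1]$ because $\rho \geq 1/2 >0$. Choose $t$ large enough that $E_1(w_t,\cG) < E(\kappa_\mu,\cG) = E_\rho(\kappa_\mu,\cG)$ (the constant function has $E_\rho$ independent of $\rho$), and set $w := w_t$. Since $E_\rho(w,\cG) \leq E_1(w,\cG)$ for $\rho \leq 1$ (the nonlinear term has a minus sign and $\rho \mapsto E_\rho$ is decreasing), we get $E_\rho(w,\cG) < E_\rho(\kappa_\mu,\cG)$ for all $\rho$, so the two endpoints sit strictly below $E_\rho(\kappa_\mu,\cG)$.

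Next I would exploit the strict local minimality from Proposition \ref{constant-solution}: since $\mu < \mu_1$, there exist $r>0$ and $\delta>0$ such that $E(u,\cG) \geq E(\kappa_\mu,\cG) + \delta$ for all $u \in H^1_\mu(\cG)$ with $\|u - \kappa_\mu\|_{H^1(\cG)} = r$. The key observation is that this barrier persists uniformly for $E_\rho$: because $E_\rho(u,\cG) = \frac12\int_\cG |u'|^2 - \frac{\rho}{p}\int_\cG |u|^p \geq E_1(u,\cG) = E(u,\cG)$ for every $\rho \leq 1$, we have $E_\rho(u,\cG) \geq E(\kappa_\mu,\cG) + \delta = E_\rho(\kappa_\mu,\cG) + \delta$ on the same sphere $\{\|u-\kappa_\mu\|_{H^1} = r\}$, for every $\rho \in [1/2,1]$. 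One should also check that $w$ is genuinely outside this ball, i.e. $\|w - \kappa_\mu\|_{H^1(\cG)} > r$; this is immediate after possibly enlarging $t$, since $\|w_t\|_{H^1(\cG)} \to \infty$.

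Finally I would conclude with the standard argument: any path $\gamma \in \Gamma$ joins $\kappa_\mu$ (inside the ball) to $w$ (outside the ball), hence by continuity crosses the sphere $\{\|u - \kappa_\mu\|_{H^1(\cG)} = r\}$ at some $t_\gamma \in (0,1)$, where $E_\rho(\gamma(t_\gamma),\cG) \geq E_\rho(\kappa_\mu,\cG) + \delta$. Taking the max over $t$ and then the inf over $\gamma$ yields $c_\rho \geq E_\rho(\kappa_\mu,\cG) + \delta > E_\rho(\kappa_\mu,\cG)$, uniformly in $\rho$, and the equality $\max\{E_\rho(\gamma(0),\cG), E_\rho(\gamma(1),\cG)\} = E_\rho(\kappa_\mu,\cG)$ follows from the choice of $w$ together with the fact that $\kappa_\mu$ is an endpoint. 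The main obstacle I anticipate is the construction of the single mass-preserving test function $w$ that works for all $\rho$ at once: on a compact graph one cannot simply scale, so one must carefully build the concentrating profile inside a fixed edge and redistribute the leftover mass without spoiling the energy estimate; however, since the redistributed part can be taken with arbitrarily small gradient and $L^p$ contributions, and the concentrating bump drives the energy to $-\infty$, this is manageable. Everything else is the textbook mountain-pass-geometry bookkeeping, made uniform in $\rho$ purely by the monotonicity $E_\rho \geq E_1 = E$.
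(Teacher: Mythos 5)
Your overall strategy is the same as the paper's: produce $w$ by concentrating a fixed mass-$\mu$ profile inside a single edge via $v_t(x)=t^{1/2}v(tx)$, use $p>6$ to drive $E_\rho(v_t,\cG)\to-\infty$ uniformly for $\rho\ge 1/2$, and combine this with the uniform strict local minimality of $\kappa_\mu$ (coming from Proposition \ref{constant-solution} and the monotonicity in $\rho$) to force every path in $\Gamma$ to cross a sphere on which $E_\rho$ exceeds $E_\rho(\kappa_\mu,\cG)$. Your observation that the barrier can be taken with a single radius $r$ and gap $\delta$ independent of $\rho$, via $E_\rho\ge E_1$, is a mild streamlining of the paper, which instead allows the radius $r_\rho$ to depend on $\rho$.

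However, your first paragraph contains two monotonicity slips that, as written, invalidate the intermediate step. First, $E_\rho(\kappa_\mu,\cG)$ is \emph{not} independent of $\rho$: the gradient term vanishes but the nonlinear term does not, so $E_\rho(\kappa_\mu,\cG)=-\tfrac{\rho}{p}\,\kappa_\mu^p\,\ell$, which strictly increases as $\rho$ decreases; in particular $E_\rho(\kappa_\mu,\cG)>E_1(\kappa_\mu,\cG)$ for $\rho<1$. Second, since $\rho\mapsto E_\rho(u,\cG)$ is decreasing, for $\rho\le 1$ one has $E_\rho(w,\cG)\ge E_1(w,\cG)$, not $\le$ (this contradicts the correct inequality $E_\rho\ge E_1$ that you yourself use in your second paragraph). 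Consequently the chain ``choose $t$ with $E_1(w_t,\cG)<E_1(\kappa_\mu,\cG)$, then $E_\rho(w,\cG)\le E_1(w,\cG)<E_\rho(\kappa_\mu,\cG)$'' does not follow. The repair is immediate and is exactly what the paper does: your own uniform estimate, valid for all $\rho\ge 1/2$, namely $E_\rho(v_t,\cG)\le \tfrac{t^2}{2}\bigl(\int_{\edge}|v'|^2\,dx-\tfrac{t^{(p-6)/2}}{p}\int_{\edge}|v|^p\,dx\bigr)$, lets you choose $t$ (independent of $\rho$) so that $\sup_{\rho\in[1/2,1]}E_\rho(v_t,\cG)<E_1(\kappa_\mu,\cG)\le E_\rho(\kappa_\mu,\cG)$, which yields the endpoint condition for every $\rho$ at once. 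A minor further remark: if $v$ already has mass exactly $\mu$ and compact support in the edge, the rescaling $v_t$ preserves the mass exactly, so there is no leftover mass to redistribute and no gluing is needed.
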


\begin{remark}
Note that the functions $\kappa_\mu$ and $w$, and hence also $\Gamma$, are independent of $\rho$.
\end{remark}

\begin{proof}
Since $\rho \leq 1$, and taking advantage of the monotonicity, we see from the proof of Proposition \ref{constant-solution} that $\kappa_{\mu}$ remains a strict local minimizer of $E_{\rho}(\cdot\,,\mathcal{G})$ in $H^1_{\mu}(\mathcal{G})$ for all $\rho\in[1/2, 1]$. \\
More precisely, for any $\rho\in[1/2, 1]$ there exists a ball $B(\kappa_{\mu}, r_\rho)$ of center $\kappa_{\mu}$ in $H^1_{\mu}(\mathcal{G})$ and radius $r_\rho>0$ such that $\kappa_\mu$ strictly minimizes $E_\rho(\cdot\,\cG)$ in $\overline{B(\kappa_\mu, r_\rho)}$, and
\begin{eqnarray}\label{3-1-1}
\inf\limits_{u\in \partial B(\kappa_{\mu}, r_\rho)}E_{\rho}(u, \mathcal{G})>E_{\rho}(\kappa_{\mu}, \mathcal{G}) > E_{1}(\kappa_{\mu}, \mathcal{G}).
\end{eqnarray}

Let $\mathrm e$ be any edge of $\cG$; we identify ${\mathrm e}$ with the interval $[-\ell_{\rm e}/2, \ell_{\rm e}/2]$. Then any compactly supported $H^1$ function $v$ on such interval, with mass $\mu$, can be seen as a function in $H^1_\mu(\cG)$. Denoting by $v_t(x) := t^{1/2} v(t x)$, with $t >1$, it is not difficult to check that $v_t \in H^1_\mu(\cG)$ (notice in particular that the support of $v_t$ is shrinking as $t$ becomes larger),
and that
\begin{align*}
E_\rho(v_t, \cG) &= \frac{t^2}{2} \int_{{\mathrm e}} |v'|^2\,dx- \frac{\rho t^{\frac{p-2}{2}}}{p}\int_{{\mathrm e}} |v|^p\,dx  \le  \frac{t^2}2\left( \int_{{\mathrm e}} |v'|^2\,dx - \frac{t^{\frac{p-6}{2}}}{p}\int_{{\mathrm e}} |v|^p\,dx\right),
\end{align*}
for every $\rho \in [1/2,1]$. Since $p>6$,
\[
E_\rho(v_t,\cG)<E_1(\kappa_\mu, \cG) < E_{\rho}(\kappa_{\mu}, \mathcal{G})
\]
for $t$ sufficiently large (independent of $\rho$). Taking now $w=v_t$ with any such choice of $t$ in the definition of $\Gamma$, the above estimate and the minimality of $\kappa_\mu$ in $\overline{B(\kappa_\mu, r_\rho)}$ imply that $w \not \in B(\kappa_\mu, r_\rho)$. Therefore, by continuity, for any $\gamma \in \Gamma$ there exist $t_\gamma \in [0,1]$ such that $\gamma(t_\gamma) \in \pa B(\kappa_\mu, r_\rho)$; and hence, by \eqref{3-1-1},
\[
\max_{t \in [0,1]} E_\rho(\gamma(t),\cG) \ge E_\rho(\gamma(t_\gamma),\cG) > \inf\limits_{u\in \partial B(\kappa_{\mu}, r_\rho)}E_{\rho}(u, \mathcal{G})>E_{\rho}(\kappa_{\mu}, \mathcal{G}) = \max\{ E_{\rho}(\kappa_{\mu}, \mathcal{G}), E_{\rho}(w, \mathcal{G})\},
\]
which completes the proof.
\end{proof}

At this point we wish to use the monotonicity trick on the family of functionals $E_\rho(\cdot\,\cG)$, in order to obtain a bounded Palais-Smale sequence at level $c_{\rho}$ for almost every $\rho \in [1/2,1]$. In fact, we need a stronger result carrying also a ``approximate Morse-index" information, Theorem \ref{thm: monot trick second order} below, proved in \cite{BoChJeSo}.

We recall the general setting in which the theorem is stated. Let $(E,\langle \cdot, \cdot \rangle)$ and $(H,(\cdot,\cdot))$ be two \emph{infinite-dimensional} Hilbert spaces and assume that:
\[ E\hookrightarrow H \hookrightarrow E',\]
with continuous injections.
 For simplicity, we assume that the continuous injection $E\hookrightarrow H$ has norm at most $1$ and identify $E$ with its image in $H$. We also introduce:  \[ \begin{cases} \|u\|^2=\langle u,u \rangle,\\ |u|^2=(u,u),\end{cases}\quad u\in E,\]
and, for $\mu \in (0,+\infty)$, we define \[ S_\mu= \{ u \in E, |u|^2=\mu \}. \]
For our application, it is plain that $E=H^1(\cG)$ and $H=L^2(\cG)$.

\begin{definition}
Let $\phi : E \rightarrow \mathbb{R}$ be a $C^2$-functional on $E$ and $\alpha \in (0,1]$. We say that $\phi'$ and $\phi''$ are $\alpha$-H\"older continuous on bounded sets if for any $R>0$ one can find $M=M(R)>0$ such that for any $u_1,u_2\in B(0,R)$:
\begin{equation}\label{Holder}
||\phi'(u_1)-\phi'(u_2)|| \leq M ||u_2-u_1||^{\alpha}, \quad ||\phi''(u_1)-\phi''(u_2)|| \leq M||u_1-u_2||^\alpha.
\end{equation}
\end{definition}

\begin{definition}\label{def D}
Let $\phi$ be a $C^2$-functional on $E$, for any $u\in E$ define the continuous bilinear map:
\[ D^2\phi(u)=\phi''(u) -\frac{\phi'(u)\cdot u}{|u|^2}(\cdot,\cdot).  \]
\end{definition}

\begin{remark}\label{rema:hessian}
If $u$ is a critical point of the functional $\phi|_{\M}$ then the restriction of $D^2\phi(u)$ to $T_u\M$ coincides with the constrained \emph{Hessian} of $\phi|_{\M}$ at $u$ (as introduced in Proposition \ref{constant-solution}.)
\end{remark}

\begin{definition}\label{def: app morse}
Let $\phi$ be a $C^2$-functional on $E$, for any $u\in \M$ and $\theta >0$, we define
 the \emph{approximate Morse index} by
\[
\tilde m_\theta(u) = \sup \left\{\dim\,L\left| \begin{array}{l} \ L \text{ is a subspace of $T_u \M$ such that: }
D^2|_{\M}\phi(u) (\varphi, \varphi) <-\theta \|\varphi\|^2, \quad \forall \varphi \in L \end{array}\right.\right\}.
\]
If $u$ is a critical point for the constrained functional $\phi|_{\M}$ and $\theta=0$, we say that this is the \emph{Morse index of $u$ as constrained critical point}.
\end{definition}

\begin{theorem}[Theorem 1 in \cite{BoChJeSo}]\label{thm: monot trick second order}
Let $I \subset (0,+\infty)$ be an interval and consider a family of $C^2$ functionals $\Phi_\rho: E \to \mathbb{R}$ of the form
\[
\Phi_\rho(u) = A(u) -\rho B(u), \qquad \rho \in I,
\]
where $B(u) \ge 0$ for every $u \in E$, and
\begin{equation}\label{hp coer}
\text{either $A(u) \to +\infty$~ or $B(u) \to +\infty$ ~ as $u \in E$ and $\|u\| \to +\infty$.}
\end{equation}
Suppose moreover that $\Phi_\rho'$ and $\Phi_\rho''$ are $\alpha$-H\"older continuous on bounded sets for some $\alpha \in (0,1]$.
Finally, suppose that there exist $w_1, w_2 \in \M$ (independent of $\rho$) such that, setting
\[
\Gamma= \left\{ \gamma \in C([0,1],\M): \ \gamma(0) = w_1, \quad \gamma(1) = w_2\right\},
\]
we have
\begin{equation}\label{mp geom}
c_\rho:= \inf_{\gamma \in \Gamma} \ \max_{ t \in [0,1]} \Phi_\rho(\gamma(t)) > \max\{\Phi_\rho(w_1), \Phi_\rho(w_2)\}, \quad \rho \in I.
\end{equation}
Then, for almost every $\rho \in I$, there exist sequences $\{u_n\} \subset \M$ and $\zeta_n \to 0^+$ such that, as $n \to + \infty$,
\begin{itemize}
\item[(i)] $\Phi_\rho(u_n) \to c_\rho$;
\item[(ii)] $||\Phi'_\rho|_{\M}(u_n)|| \to 0$;
\item[(iii)] $\{u_n\}$ is bounded in $E$;
\item[(iv)] $\tilde m_{\zeta_n}(u_n) \le 1$.
\end{itemize}
\end{theorem}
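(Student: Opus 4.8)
My plan is to combine the monotonicity trick of \cite{J-PRSE1999} with a second-order (Morse-type) min-max deformation in the spirit of \cite{FG1, FG-1994}, the coupling of the two being the real content of the statement. I would first record the \emph{monotonicity} of the min-max level: since $B(u) \ge 0$, the map $\rho \mapsto \Phi_\rho(u)$ is non-increasing for every fixed $u$, hence $\rho \mapsto \max_{t}\Phi_\rho(\gamma(t))$ is non-increasing for every $\gamma \in \Gamma$, and therefore $\rho \mapsto c_\rho$ is non-increasing on $I$. A monotone real function is differentiable almost everywhere; I fix once and for all a point $\rho \in I$ at which $c'(\rho)$ exists and is finite, and carry out the whole argument at this $\rho$.

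The next step is to exploit differentiability to confine the relevant min-max dynamics to a fixed ball. Pick $\rho_n \nearrow \rho$ and nearly optimal paths $\gamma_n \in \Gamma$ with $\max_t \Phi_{\rho_n}(\gamma_n(t)) \le c_{\rho_n} + (\rho - \rho_n)$. For any $t$ with $\Phi_\rho(\gamma_n(t)) \ge c_\rho - (\rho-\rho_n)$, the identity $\Phi_{\rho_n}(u) - \Phi_\rho(u) = (\rho-\rho_n) B(u)$ combined with $c_\rho \le \max_t \Phi_\rho(\gamma_n(t))$ yields
\[
B(\gamma_n(t)) \le \frac{c_{\rho_n} - c_\rho}{\rho - \rho_n} + 2 \le -c'(\rho) + 3
\]
for $n$ large. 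Hence $B$ is bounded along the almost-maximal points; since their energy is close to $c_\rho$, the relation $A = \Phi_\rho + \rho B$ forces $A$ to be bounded there as well, and the coercivity hypothesis \eqref{hp coer} then gives a uniform bound $\|\gamma_n(t)\| \le R$. Thus, at level $c_\rho$, the mountain-pass geometry \eqref{mp geom} is realized inside a fixed ball $\overline{B(0,R)} \subset E$; the classical outcome of the trick is already a \emph{bounded} Palais-Smale sequence $\{u_n\}$ at level $c_\rho$, giving (i)--(iii).

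It remains to produce such a sequence with the additional index bound (iv). Here I would argue by contradiction. If no sequence satisfying (i)--(iv) existed, then, applying the definition of $\tilde m_\theta$ with $\theta=\delta=1/k$ and using the monotonicity of $\theta \mapsto \tilde m_\theta(u)$, a diagonal extraction would already build one; so the failure yields $\theta_0>0$ and $\delta_0>0$ such that $\tilde m_{\theta_0}(u) \ge 2$ for every $u \in \M \cap \overline{B(0,R)}$ with $|\Phi_\rho(u)-c_\rho|\le \delta_0$ and $\|\Phi_\rho'|_{\M}(u)\| \le \delta_0$. On the bounded set $\overline{B(0,R)}$ the $\alpha$-H\"older continuity of $\Phi_\rho''$ makes these negative-definite cones of $D^2|_{\M}\Phi_\rho(u)$ vary in a controlled way, and allows me to build a deformation $\eta:[0,1]\times \M \to \M$ which fixes the endpoints $w_1,w_2$, stays on $\M$, and strictly lowers $\max_t \Phi_\rho(\gamma(t))$ below $c_\rho$ for every $\gamma \in \Gamma$: intuitively, a one-parameter family cannot be linked by critical points whose unstable directions form a subspace of dimension at least $2$, so the family can be pushed underneath the level $c_\rho$. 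This contradicts the definition of $c_\rho$ in \eqref{mp geom}, and yields the required sequences $\{u_n\}$ and $\zeta_n \to 0^+$.

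The main obstacle is precisely the construction in the last paragraph: one has to design a deformation that simultaneously respects the constraint $\M$, fixes the endpoints, and uses the $(\ge 2)$-dimensional negative-definite subspace to decrease the maximum along \emph{every} admissible path, while remaining well defined over a uniform deformation time. This is where the $C^2$ regularity and the $\alpha$-H\"older continuity of $\Phi_\rho''$ are indispensable, since they guarantee that the finite-dimensional push-down along the unstable cones is uniformly effective on $\overline{B(0,R)}$. The genuinely new point, compared with either ingredient alone, is that the localization produced by the monotonicity trick is exactly what makes the second-order deformation applicable: without the uniform bound $R$ one could neither invoke the H\"older estimates on a fixed set nor control the deformation, while without the second-order deformation one would recover only the classical bounded Palais-Smale sequence, losing the index information (iv).
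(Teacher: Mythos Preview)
The paper does not contain a proof of this theorem: it is quoted verbatim as \cite[Theorem 1]{BoChJeSo} and used as a black box in the proof of Proposition \ref{prop: ex for ae rho}. There is therefore no ``paper's own proof'' to compare your proposal against.

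On its own merits, your outline follows the standard architecture one would expect for such a result: the monotonicity of $\rho \mapsto c_\rho$ gives a.e.\ differentiability, and at a differentiability point the difference-quotient argument localizes the almost-maximal part of nearly-optimal paths to a fixed ball, yielding the bounded Palais--Smale sequence (i)--(iii). The contradiction scheme for (iv) is also the natural one. However, the entire technical content of the theorem lies in the paragraph you flag as ``the main obstacle'': constructing a deformation on $S_\mu$ that exploits a uniformly two-dimensional negative cone of $D^2\Phi_\rho$ to push every admissible path strictly below $c_\rho$. Your sketch asserts that this can be done but does not indicate how; in particular, the intuition that ``a one-parameter family cannot be linked by critical points whose unstable directions form a subspace of dimension at least $2$'' is a statement about genuine critical points, whereas here one must work with approximate critical points on a Hilbert manifold, select the negative directions in a way compatible with the tangent space $T_uS_\mu$, and patch the local push-downs into a global deformation that fixes $w_1, w_2$ and controls the energy uniformly. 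This is precisely the construction carried out in \cite{BoChJeSo} (building on \cite{FG1, FG-1994} in the unconstrained case), and it is substantially more delicate than your sketch suggests. As written, your proposal is a correct roadmap but not a proof.
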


We are ready to give the proof of Proposition \ref{prop: ex for ae rho}.

\begin{proof}[Proof of Proposition \ref{prop: ex for ae rho}]
We apply Theorem \ref{thm: monot trick second order} to the family of functionals $E_\rho(\cdot\,, \cG)$, with $E=H^1(\cG)$, $H=L^2(\cG)$, $S_\mu = H^1_\mu(\cG)$, and $\Gamma$ defined in Lemma \ref{Lem2-20-1}. Setting
\[
A(u) = \frac12\int_{\cG} |u'|^2\,dx \quad \text{and} \quad  B(u) = \frac{\rho}{p}\int_{\cG} |u|^p.
\]
assumption \eqref{hp coer} holds, since we have that
\[
u \in H^1_\mu(\cG), \ \|u\| \to +\infty \quad \implies \quad A(u) \to +\infty.
\]
Moreover, assumption \eqref{Holder} holds since the unconstrained first and second derivatives of $E_\rho$ are of class $C^1$, and hence locally H\"older continuous, on $H_\mu^1(\cG)$.

In this way, for almost every $\rho \in [1/2,1]$ there exist a bounded Palais-Smale sequence $\{u_n\}$ for the constrained functional $E_\rho(\cdot\,,\cG)|_{H^1_\mu(\cG)}$ at level $c_\rho$, and $\zeta_n \to 0^+$, such that $\tilde m_{\zeta_n}(u_n) \le 1$. Moreover, as explained in \cite[Remark 1.4]{BoChJeSo}, since $u \in S_\mu \ \mapsto \ |u| \in S_\mu$, $w_1, w_2 \ge 0$, the map $u \mapsto |u|$ is continuous, and $E_\rho(u, \cG) = E_\rho(|u|, \cG)$, it is possible to choose $\{u_n\}$ with the property that $u_n \ge 0$ on $\cG$. By Lemma \ref{Embedding}, we have that $u_n \to u_\rho$ strongly in $H^1(\cG)$, and $u_\rho \ge 0$ is a constrained critical point, thus a non-negative solution to \eqref{pb rho}, for $\lambda_\rho=\lambda(u_\rho)$ (Lemma \ref{Embedding} is stated for the particular value $\rho=1$; however, it is immediate to check that this choice does not play any role in the proof). The case when $u_\rho$ vanishes in one (or more) vertexes can be easily ruled out by the Kirchhoff condition, the uniqueness theorem for ODEs, and the fact that $u_\rho \ge 0$. Thus, $u_\rho$ is strictly positive on each vertex, whence $u_\rho >0$ in $\cG$ by the strong maximum principle. \smallskip

It remains to show that the Morse index $m(u_\rho)$, defined in Definition \ref{def: morse} with $\lambda = \lambda(u_\rho)$ is at most $2$. This result can be directly deduce from \cite[Theorem 3]{BoChJeSo} but we prove it here in our setting for completeness. We omit the dependence of the functionals $E_\rho(\cdot\,,\cG)$ on $\cG$, to simplify the notation. Defining
$$\overline{\lambda}_{\rho}:=  -\frac{1}{\mu}E'_\rho(u_{\rho})\cdot u_{\rho} =  - \lim_{n \to \infty}\frac{1}{\mu}E'_\rho(u_n)\cdot u_n,$$
we conclude from Theorem \ref{thm: monot trick second order} (ii) that $\overline{\lambda}_{\rho} = \lambda_{\rho}$, we refer to \cite[Remark 1.2]{BoChJeSo} for more detail.\smallskip

To show that $u_{\rho} \in \M$  has Morse index at most $1$ as constrained critical point, see Definition \ref{def: app morse}, we assume by contradiction that there exists a $W_0 \subset T_{u}\M$ with $\dim W_0 =2$ such that
$$D^2 E_{\rho}(u_{\rho})(w,w)  < 0, \quad \mbox{for all } w \in W_0 \backslash \{0\}. $$
Since $W_0$ is of finite dimension, by compactness and homogeneity, there exists a $\beta >0$ such that
\begin{equation*}
D^2 E_{\rho}(u_{\rho})(w,w) < - \beta ||w||^2,  \quad \mbox{for all } w \in W_0.
\end{equation*}
Now, from \cite[Corollary 1]{BoChJeSo} or using directly that $E_\rho'$ and $E_\rho''$ are $\alpha$-H\"older continuous on bounded sets for some $\alpha \in (0,1]$, we deduce that there exists a $\delta_1 >0$ such that, for any $v \in \M$ such that $||v-u || \leq \delta_1$,
\begin{equation}\label{L-conditionD2step1} D^2E_{\rho}(v)(w,w) < - \frac{\beta}{2} ||w||^2 \quad \mbox{for all } w \in W_0.\end{equation}
Since $\{u_n\} \subset \M $ converges to $u$ we have that $||u_n -u|| \leq \delta_1$ for $n \in \N$ large enough. Then since $\dim W_0 >1$,
\eqref{L-conditionD2step1} provides a contradiction with Theorem \ref{thm: monot trick second order} (iv) where we recall that $\zeta_n \to 0^+$.  Finally, recalling that $\M$ is of codimension 1 in $H^1(\cG)$ and observing that, for any $ w \in H^1(\cG)$,
$$D^2 E_{\rho}(u_{\rho})(w,w) := E''_{\rho}(u)(w,w) +\lambda_{\rho} (w,w) = \int_{\cG} \left[ |w'|^2 + \left( \lambda_{\rho} -(p-1) |u_{\rho}|^{p-2}\right) w^2\right]  \, dx,$$
we obtain that $m(u_{\rho}) \leq 2$.
\end{proof}

\section{Blow-up Phenomena}\label{sec: blow-up}

Proposition \ref{prop: ex for ae rho} does not ensure the existence of a mountain pass solution for the original problem obtained when $\rho=1$. However, it gives the existence of a sequence $\rho_n \to 1^-$, with a corresponding sequence of mountain pass critical points $u_{\rho_n} \in H^1_\mu(\cG)$ of $E_{\rho_n}(\cdot\,, \cG)$, constrained on $H^1_\mu(\cG)$. We aim to show that $\{u_{\rho_n}\}$ converges to a constrained critical point of $E_1(\cdot\,,\cG)$. To this purpose, it is sufficient to prove that $\{u_{\rho_n}\}$ is bounded in $H^1(\cG)$, thanks to Lemma \ref{Embedding}. The advantage of working with $\{u_{\rho_n}\}$ is that this is a sequence of \emph{solutions of approximating problems with uniformly bounded Morse index}. In this section we perform a blow-up analysis for this type of sequences, in the spirit of \cite{EspPet}. This analysis, of independent interest, will be used in the next section to gain the desired boundedness of $\{u_{\rho_n}\}$.

A somehow related study, regarding least action solutions, was previously performed in \cite{DGMP}.

\subsection*{General setting for the blow-up analysis.}

For the sake of generality, in what follows we consider a general metric graph satisfying the following assumption:
\[
\cG \text{ has a finite number of vertexes and edges (but is not necessarily compact).}
\]
Let $\{u_n\} \in H^1(\cG)$ be a sequence of positive solutions of the NLS equation, coupled with Kirchhoff condition at the vertexes:
\begin{equation}\label{eq: blow-up}
\begin{cases}
-u_n''+\lambda_n u_n = \rho_n u_n^{p-1} & \text{on $\cG$}, \\
u_n>0 & \text{on $\cG$}, \\
\sum_{\rm{e} \succ \rm{v}} u_{e,n}'(\rm{v}) = 0& \forall \rm{v} \in \mathcal{V},
\end{cases}
\end{equation}
where $\rho_n \to 1$ (in fact, it would be sufficient to ask that $\rho_n \to \rho>0$, regardless of the value of $\rho$), and $\lambda_n \in \R$.
%

We denote by $B_r(x_0)=\{x \in \cG: \ {\rm dist}(x,x_0)<r\}$. Moreover, we denote by $\cG_m$ the star-graph with $m \ge 1$ half-lines glued together at their common origin $0$ (note that $\cG_1=\R^+$, and $\cG_2$ is isometric to $\R$).

It is also convenient to recall the definition of $Q(\phi; u, \mathcal{G})$, see \eqref{second differential}.

At first, we note that if $\lambda_n \to +\infty$, then $u_n$ blows-up along any sequence of local maximum points.

\begin{lemma}\label{lem: stima max}
Let $x_n \in \cG$ be a local maximum point for $u_n$. Then
\[
u_n(x_n) \ge \lambda_n^\frac{1}{p-2}.
\]
\end{lemma}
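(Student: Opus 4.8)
The statement is a local pointwise lower bound at a local maximum of $u_n$. The natural approach is to use the equation together with the sign of the second derivative at a maximum. Let $x_n$ be a local maximum of $u_n$. I distinguish two cases according to whether $x_n$ lies in the interior of an edge or at a vertex.

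\emph{Case 1: $x_n$ is an interior point of some edge.} Here $u_n$ is $C^2$ near $x_n$ (by elliptic regularity on the edge, as noted in the first remark of the paper), and since $x_n$ is a local maximum we have $u_n'(x_n)=0$ and $u_n''(x_n)\le 0$. Plugging this into the first equation of \eqref{eq: blow-up}, namely $-u_n''(x_n)+\lambda_n u_n(x_n)=\rho_n u_n(x_n)^{p-1}$, gives
\[
\lambda_n u_n(x_n) \le \lambda_n u_n(x_n) - u_n''(x_n) = \rho_n u_n(x_n)^{p-1}.
\]
Since $u_n(x_n)>0$ we may divide to obtain $\lambda_n \le \rho_n u_n(x_n)^{p-2}$. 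As the stated bound presumably intends $\rho_n$ close to $1$ (or at least $\rho_n\ge 1$ eventually, or one simply absorbs the constant), we get $u_n(x_n)\ge (\lambda_n/\rho_n)^{1/(p-2)}$, which for $\rho_n\to 1$ (or with the convention $\rho_n\le 1$) yields $u_n(x_n)\ge \lambda_n^{1/(p-2)}$. In the borderline reading, the inequality should be stated as $u_n(x_n)^{p-2}\ge \lambda_n/\rho_n$; I will write it that way if needed, but the spirit is exactly this one-line computation.

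\emph{Case 2: $x_n$ is a vertex ${\rm v}$.} Now $x_n$ is a local maximum of $u_n$ restricted to each incident edge, so on each edge ${\rm e}\succ {\rm v}$ the outer derivative satisfies $u_{{\rm e},n}'({\rm v})\le 0$. The Kirchhoff condition $\sum_{{\rm e}\succ {\rm v}}u_{{\rm e},n}'({\rm v})=0$ then forces each $u_{{\rm e},n}'({\rm v})=0$. Pick any incident edge ${\rm e}$ and integrate the equation $-u_{{\rm e},n}''+\lambda_n u_{{\rm e},n}=\rho_n u_{{\rm e},n}^{p-1}$ from the vertex; since $u_{{\rm e},n}'({\rm v})=0$ and $u_{{\rm e},n}\le u_n(x_n)$ on a neighborhood of ${\rm v}$, for small $t>0$,
\[
-u_{{\rm e},n}'(t) = \int_0^t \bigl(\lambda_n u_{{\rm e},n}(s) - \rho_n u_{{\rm e},n}(s)^{p-1}\bigr)\,ds.
\]
If $u_n(x_n)^{p-2} < \lambda_n/\rho_n$, then $\lambda_n u_{{\rm e},n} - \rho_n u_{{\rm e},n}^{p-1} = u_{{\rm e},n}(\lambda_n - \rho_n u_{{\rm e},n}^{p-2}) > 0$ near ${\rm v}$, so $u_{{\rm e},n}'(t)<0$ for small $t>0$; this contradicts $x_n$ being a local maximum on ${\rm e}$ (it would make $u_{{\rm e},n}$ decreasing to the right of ${\rm v}$, which is consistent, so I need the opposite direction). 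Cleaner: since $u_n'({\rm v})=0$ on every incident edge and $x_n$ is a local max, on at least one edge $u_{{\rm e},n}$ cannot be identically constant (unless $u_n$ is globally constant, which is excluded by $u_n>0$ solving the equation only if $\lambda_n = \rho_n u_n^{p-2}$, giving equality directly). So on that edge, by the ODE with zero initial derivative, the standard comparison/strong maximum principle argument shows that for $u_{{\rm e},n}$ to have a local max at ${\rm v}$ with $u_{{\rm e},n}'({\rm v})=0$, we need the "potential" $\lambda_n - \rho_n u_{{\rm e},n}^{p-2}$ to be $\le 0$ at ${\rm v}$, i.e. $u_n(x_n)^{p-2}\ge \lambda_n/\rho_n$. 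Alternatively, and most robustly, reduce Case 2 to Case 1: reflect/double the edge to realize a neighborhood of ${\rm v}$ as an interior situation, or simply note that a local maximum at a vertex with Kirchhoff condition and all outer derivatives zero behaves, via the ODE uniqueness, like an interior maximum on the "unfolded" edge, so the same computation $-u_n''+\lambda_n u_n = \rho_n u_n^{p-1}$ with $u_n''({\rm v})\le 0$ (interpreted as the second derivative along the unfolded edge) applies.

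\emph{Main obstacle.} The only delicate point is handling the vertex case rigorously: "$u_n''(x_n)\le 0$" has no direct meaning at a vertex, so one must either argue via integrating the ODE with the vanishing outer derivatives (using that $x_n$ is a local max on each edge to get monotonicity information near ${\rm v}$) or unfold a neighborhood of the vertex onto an interval and invoke ODE uniqueness to reduce to the interior case. Everything else is the immediate one-line computation from the equation. I expect the paper's proof to essentially do Case 1 and remark that Case 2 follows by the same reasoning applied edge-by-edge using the Kirchhoff condition to kill the outer derivatives.
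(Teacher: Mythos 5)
Your proposal is correct and follows essentially the same route as the paper: in the interior case one uses $u_n''(x_n)\le 0$ directly, and at a vertex the Kirchhoff condition forces every outer derivative to vanish, after which the equation yields $\lambda_n u_n(x_n)-\rho_n u_n(x_n)^{p-1}=u_n''(x_n)\le 0$ and hence the bound (using $\rho_n\le 1$, as you correctly flag). The only difference is that your vertex case takes a detour (an integration attempt you yourself discard, then an unfolding remark), whereas the paper simply observes that a local maximum at an endpoint with vanishing first derivative already gives $u_n''(x_n)\le 0$ by Taylor expansion on the closed edge, so no comparison or reflection argument is needed.
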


\begin{proof}
Let $\rm{e}$ be an edge of $\cG$ such that $x_n \in \rm{e} \simeq [0,\ell_e]$; it is plain that $u_n|_e \in C^2([0,\ell_e])$, by regularity. If $x_n$ is in the interior of $e$, then $u_n''(x_n) \le 0$; if instead $x_n$ is a vertex of $e$, then, by the Kirchhoff condition, $u_n'(x_n)$ must vanish, and hence $u_n''(x_n) \le 0$ again. In both cases, the equation of $u_n$ (which holds on the whole closed interval $[0,\ell_e]$) yields
\[
\lambda_n u_n(x_n) - \rho_n u_n^{p-1}(x_n) = u_n''(x_n) \le 0,
\]
whence the thesis follows.
\end{proof}


The next theorem provides a precise behavior, close to a local maximum point, of the sequence $\{u_n\}$, as $\lambda_n \to +\infty$ while $m(u_n)$ remains bounded. In the statement and in the proof, we will systematically identify an edge $\edge$ with the interval $[0,\ell_{\edge}]$, where $\ell_{\edge}$ denotes the length of $\edge$. Since in this section we allow $\cG$ to be non-compact, it is admissible that $\ell_{\edge}=+\infty$ (clearly, in such case $\edge \simeq [0,+\infty)$; unless it is necessary, we will not distinguish these cases).

\begin{theorem}\label{thm: blow-up 1}
Suppose that
\[
\lambda_n \to +\infty \quad \text{ and } \quad m(u_n) \le \bar k \quad \text{ for some $\bar k \ge 1$}.
\]
Let $x_n \in \cG$ be such that, for some $R_n \to \infty$,
\begin{equation}\label{hp max}
u_n(x_n) = \max_{B_{R_n \tilde \eps_n}(x_n)} u_n \quad \text{where } \tilde \eps_n=(u_n(x_n))^{-\frac{p-2}{2}} \to 0.
\end{equation}
Suppose moreover that
\begin{equation}\label{int max}
\limsup_{n \to \infty} \frac{{\rm{dist}}(x_n, \mathcal{V})}{\tilde \eps_n} = +\infty.
\end{equation}
Then, up to a subsequence, the following holds:
\begin{itemize}
\item[($i$)] all the $x_n$ lie in the interior of the same edge ${\rm{e}} \simeq [0,\ell_e]$.
\item[($ii$)] Setting $\eps_n = \lambda_n^{-\frac12}$, we have that
\begin{equation}\label{rel eps tilde eps}
\begin{split}
\frac{\tilde \eps_n}{\eps_n} &\to (0,1], \\
\frac{{\rm{dist}}(x_n, \mathcal{V})}{\eps_n} &\to +\infty \quad \text{as $n \to \infty$},
\end{split}
\end{equation}
and the scaled sequence
\begin{equation}\label{bu int}
v_n(y):= \eps_n^{\frac{2}{p-2}} u_n(x_n + \eps_n y) \quad \text{for }y \in \frac{[0,\ell_e] -x_n}{\eps_n}
\end{equation}
converges to $V$ in $C^2_{\rm{loc}}(\R)$ as $n \to \infty$, where $V \in H^1(\R)$ is the (unique) positive finite energy solution to
\[
\begin{cases}
-V'' + V = V^{p-1}, \quad U_0>0 & \text{in $\R$}, \\
V(0) = \max_\R V,\\
V(x) \to 0 & \text{as $|x| \to +\infty$}.
\end{cases}
\]
\item[($iii$)] There exists $\phi_n \in C^\infty_c(\cG)$, with ${\rm{supp}}\, \phi_n \subset B_{\bar R \eps_n}(x_n)$ for some $\bar R>0$, such that
\[
Q(\phi_n; u_n, \cG) <0.
\]
\item[($iv$)] For all $R>0$ and $q \ge 1$, we have that
\[
\lim_{n \to \infty} \lambda_n^{\frac12- \frac{q}{p-2}} \int_{B_{R \eps_n}(x_n)} u_n^{q}\,dx=  \lim_{n \to \infty} \int_{B_R(0)} v_n^q\,dy = \int_{B_R(0)} V^q\,dy.
\]
\end{itemize}
If, instead of \eqref{int max}, we suppose that
\begin{equation}\label{ver max}
\limsup_{n \to \infty} \frac{{\rm{dist}}(x_n, \mathcal{V})}{\tilde \eps_n} <+\infty,
\end{equation}
then, up to a subsequence,
\begin{itemize}
\item[($i'$)] $x_n \to {\rm{v}} \in \mathcal{V}$, and all the $x_n$ lie on the same edge ${\rm{e}}_1 \simeq [0,\ell_1]$, where the vertex ${\rm{v}}$ is identified by the coordinate $0$ on ${\rm{e}}_1$.
\item[($ii'$)] Let ${\rm{e}}_2 \simeq [0,\ell_2]$, \dots, ${\rm{e}}_m \simeq [0,\ell_m]$ be the other edges of $\cG$ having ${\rm{v}}$ as a vertex (if any), where ${\rm{v}}$ is identified by the coordinate $0$ on each ${\rm{e}}_i$. Setting $\eps_n = \lambda_n^{-\frac12}$, we have that
\begin{equation}\label{rel eps tilde eps 2}
\begin{split}
\frac{\tilde \eps_n}{\eps_n} &\to (0,1], \\
\limsup_{n \to \infty} &\frac{{\rm{dist}}(x_n, \mathcal{V})}{\eps_n} < +\infty,
\end{split}
\end{equation}
and the scaled sequence defined by
\[
v_n(y):= \eps_n^{\frac{2}{p-2}} u_n(\eps_n y) \quad \text{for }y \in \frac{{\rm{e}}_i}{\eps_n}, \text{ for $i=1,\dots,m$},
\]
converges to a limit $V$ in $C^0_{\rm{loc}}(\cG_m)$ as $n \to \infty$. Denoting by $V_i$ the restriction of $V$ to the $i$-th half-line $\ell_i$ of $\cG_m$, and by $v_{i,n}$ the restriction of $v_n$ to ${\rm e}_i/\eps_n$, we have moreover that $v_{i,n} \to V_i$ in $C^2_{\rm{loc}}([0,+\infty))$. Finally, $V \in H^1(\cG_m)$ is a positive finite energy solution to the NLS equation on the star-graph
\[
\begin{cases}
-V'' + V = V^{p-1}, \quad V>0 & \text{in $\cG_m$},\\
\sum_{i=1}^m V_i'(0^+) =0, \\
V(x) \to 0 & \text{as }\dist(x,0) \to \infty
\end{cases}
\]
with a global maximum point $\bar x$ located on $\ell_1$, whose coordinate is
\[
\bar x = \lim_{n \to \infty} \bar x_n \in [0,+\infty), \quad \text{where} \quad \bar x_n := \frac{{\rm{dist}}(x_n, \mathcal{V})}{\eps_n} .
\]
\item[($iii'$)] There exists $\phi_n \in C^\infty_c(\cG)$, with ${\rm{supp}}\,\phi_n \subset B_{\bar R \eps_n}(x_n)$ for some $\bar R>0$, such that
\[
Q(\phi_n; u_n, \cG) <0.
\]
\item[($iv'$)] For all $R>0$ and $q \ge 1$, we have that
\[
\lim_{n \to \infty} \lambda_n^{\frac12- \frac{q}{p-2}} \int_{B_{R \eps_n}(x_n)} u_n^{q}\,dx =   \lim_{n \to \infty} \int_{B_R(\bar x_n)} v_n^q\,dy = \int_{[0, \bar x + R]} V_1^q\,dy + \sum_{i=2}^m \int_{[0, R- \bar x]} V_i^q\,dy  = \int_{B_R(\bar x)} V^q\,dy
\]
(where $B_R(\bar x_n)$ and $B_R(\bar x)$ denote the balls in the scaled and in the limit graphs, respectively).
\end{itemize}
\end{theorem}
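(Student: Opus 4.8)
The strategy is a standard rescaling (blow-up) argument, executed carefully so that the vertex structure of the graph is respected. The plan is as follows. First I would establish the basic rescaling: set $\tilde\eps_n = u_n(x_n)^{-(p-2)/2}$ and define $\tilde v_n(y) = \tilde\eps_n^{2/(p-2)} u_n(x_n + \tilde\eps_n y) = u_n(x_n + \tilde\eps_n y)/u_n(x_n)$ on the rescaled edge (and rescaled star-graph, if $x_n$ is close to a vertex). Then $\tilde v_n$ satisfies $-\tilde v_n'' + \lambda_n \tilde\eps_n^2 \tilde v_n = \rho_n \tilde v_n^{p-1}$, with $\tilde v_n(0)=1$ and $0 \le \tilde v_n \le 1$ on the rescaled ball of radius $R_n\to\infty$ (this is exactly hypothesis \eqref{hp max}). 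By Lemma \ref{lem: stima max}, $\lambda_n \tilde\eps_n^2 = \lambda_n u_n(x_n)^{-(p-2)} \le 1$, so the coefficient $\lambda_n \tilde\eps_n^2$ is bounded; up to a subsequence it converges to some $c \in [0,1]$. Elliptic estimates (on each edge, and using the Kirchhoff condition at a vertex to pass derivatives) give $C^2_{\rm loc}$ convergence of $\tilde v_n$ to a solution $\tilde V \ge 0$, $\tilde V \le 1$, $\tilde V(0)=1$, of $-\tilde V'' + c\tilde V = \tilde V^{p-1}$ either on $\R$ (case \eqref{int max}, since the distance to $\mathcal V$, rescaled by $\tilde\eps_n$, goes to infinity, so locally the graph looks like a line) or on a star-graph $\cG_m$ with Kirchhoff conditions (case \eqref{ver max}).

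The crucial point — and I expect the main obstacle — is to rule out $c=0$ and to force $\tilde V$ to have finite energy, i.e.\ $\tilde V \in H^1$, so that $\tilde V$ is (after the further normalization $V(y) = c^{1/(p-2)}\tilde V(y/\sqrt c)$, giving $\eps_n = \lambda_n^{-1/2}$ and $\tilde\eps_n/\eps_n = \sqrt c \in (0,1]$) the unique soliton on $\R$ or the unique positive finite-energy ground-type solution on $\cG_m$. Here is where the Morse index bound $m(u_n)\le\bar k$ enters decisively, in the form of a Liouville-type argument. If $c=0$, then $\tilde V$ solves $-\tilde V'' = \tilde V^{p-1}$ on $\R$ (or $\cG_m$) with $0 \le \tilde V \le 1$, $\tilde V(0)=1$; a positive bounded solution of this equation on $\R$ forces $\tilde V$ to be constant (by a concavity/maximum-point argument, since $\tilde V'' = -\tilde V^{p-1}<0$ wherever $\tilde V>0$, a bounded-above concave function on $\R$ is constant, but then $\tilde V\equiv 1$ contradicts $\tilde V''=-1\ne 0$) — so this case is impossible on a line. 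On a star-graph the same contradiction is reached edge-by-edge using the Kirchhoff balance. Alternatively, and more robustly, if $\tilde V$ fails to decay one produces, by translating the compactly supported negativity directions of $Q$, an unbounded number of mutually disjoint regions each carrying a direction with $Q(\cdot\,;u_n,\cG)<0$, contradicting $m(u_n)\le\bar k$; this is the Esposito–Petralla scheme of \cite{EspPet}. The decay of $\tilde V$ at infinity (hence $\tilde V\in H^1$, and $c>0$) then follows from the equation once $c>0$, by a standard barrier/ODE comparison, and the uniqueness of the limiting soliton on $\R$ is classical, while on $\cG_m$ it is known (the positive finite-energy solution with a prescribed maximum location is unique up to the structure described).

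Once the limit $V$ is identified, parts $(ii)$/$(ii')$ are complete, and the remaining items follow quickly. For $(iii)$/$(iii')$: since $V$ is a nonconstant positive solution on $\R$ (resp.\ $\cG_m$), its linearized operator $-\psi'' + (c - (p-1)\tilde V^{p-2}(\cdot/\sqrt c))\psi$ — equivalently the quadratic form $Q$ for $V$ — has a negative direction, realized by a compactly supported test function $\psi$ (e.g.\ built from $V'$ near the maximum, or from the known negativity of $Q(V';V,\cdot)$ type arguments); scaling $\psi$ back to $\cG$ via $\phi_n(x) = \psi((x-x_n)/\eps_n)$ (suitably cut off, with support in $B_{\bar R\eps_n}(x_n)$) and using the $C^2_{\rm loc}$ convergence together with the change of variables in $Q$ gives $Q(\phi_n;u_n,\cG)<0$ for $n$ large. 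For $(iv)$/$(iv')$: the change of variables $x = x_n + \eps_n y$ (resp.\ $x=\eps_n y$) turns $\lambda_n^{1/2 - q/(p-2)}\int_{B_{R\eps_n}(x_n)} u_n^q\,dx$ into $\int_{B_R(0)} v_n^q\,dy$ (resp.\ $\int_{B_R(\bar x_n)} v_n^q$), and the claimed limits follow from the locally uniform convergence $v_n\to V$ on the (scaled, then limiting) graph, splitting the integral over the finitely many edges meeting at ${\rm v}$ as written. Part $(i)$/$(i')$ — that all $x_n$ eventually lie on a single edge, resp.\ that $x_n\to{\rm v}\in\mathcal V$ on a single edge ${\rm e}_1$ — is obtained by passing to a subsequence along which the (finitely many) edges containing $x_n$, and the nearest vertex, stabilize; in case \eqref{ver max} the bounded quantity ${\rm dist}(x_n,\mathcal V)/\tilde\eps_n$ together with $\tilde\eps_n\to 0$ forces ${\rm dist}(x_n,\mathcal V)\to 0$, i.e.\ $x_n\to{\rm v}$, and $\bar x_n = {\rm dist}(x_n,\mathcal V)/\eps_n$ is bounded because $\eps_n \asymp \tilde\eps_n$, giving $\bar x = \lim\bar x_n$ as the coordinate of the maximum of $V$ on $\ell_1$.
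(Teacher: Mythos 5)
Your plan follows essentially the same route as the paper's proof: rescale at the scale $\tilde \eps_n$, obtain $C^2_{\rm loc}$ convergence by elliptic estimates, transfer the bound $m(u_n)\le \bar k$ to the limit via compactly supported rescaled test functions, exclude the degenerate case $\lambda_n\tilde\eps_n^2\to 0$ by a Liouville-type argument (the paper uses phase-plane analysis where you use concavity; note the hypothesis you actually need there is boundedness \emph{below}, which holds since $\tilde V\ge 0$, and on the star graph the Kirchhoff balance forces all outgoing derivatives to vanish as you indicate), and then deduce decay and $H^1$-membership of the limit from stability outside a compact set in the spirit of Esposito--Petralla (the paper's Lemma \ref{lem: dec fmi}), which is indeed the essential step --- for $c>0$ a barrier argument alone cannot give decay, since nonconstant bounded periodic solutions exist. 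The remaining items are handled as in the paper, with the one caveat that for (iii)/(iii$'$) the negative direction should be realized by testing $Q$ with $V$ itself (giving $(2-p)\int V^p<0$, as in Lemma \ref{lem: pos mi}) rather than with $V'$, which yields $0$ by translation invariance on $\R$ and is not even an admissible test function on $\cG_m$.
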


The proof of the theorem is divided into several intermediate steps. We start with some preliminary results.

%
%
%
%
\begin{lemma}\label{lem: dec fmi}
Let $U \in H^1_{{\rm loc}}(\cG_m)$ be a solution to
\begin{equation}\label{nls on star}
\begin{cases}
-U'' + \lambda U = \rho U^{p-1} & \text{in $\cG_m$}, \\
U>0 & \text{in $\cG_m$}, \\
\sum_{i=1}^m U_i'(0) = 0,
\end{cases}
\end{equation}
for some $p>2$, $\rho, \lambda>0$, where $U_i$ denotes the restriction of $U$ on the $i$-th half-line of $\cG_m$. Suppose that $U$ is stable outside a compact set $K$, in the sense that $Q(\varphi; U, \cG_m) \ge 0$ for all $\varphi \in H^1(\cG_m) \cap C_c(\cG_m \setminus K)$. Then $U(x) \to 0$ as ${\rm{dist}}(x,0) \to +\infty$, and $U \in H^1(\cG)$.
\end{lemma}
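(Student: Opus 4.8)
\textbf{Proof proposal for Lemma \ref{lem: dec fmi}.}

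The plan is a two-stage argument: first establish that solutions to \eqref{nls on star} which are stable outside a compact set automatically have finite mass and energy (equivalently $U \in H^1(\cG_m)$), and then deduce the pointwise decay at infinity from the finiteness of $\int_{\cG_m} U^2 + |U'|^2$ together with the equation. Throughout I will work half-line by half-line, using that on each $\ell_i \simeq [0,+\infty)$ the function $U_i$ solves the one-dimensional ODE $-U_i'' + \lambda U_i = \rho U_i^{p-1}$ with $U_i>0$, so that $U_i \in C^2([0,+\infty))$ by elliptic regularity.

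First I would exploit the stability assumption. Fix $R_0>0$ large enough that $K \subset B_{R_0}(0)$, so that $Q(\varphi;U,\cG_m)\ge 0$ for every $\varphi \in H^1(\cG_m)\cap C_c(\cG_m\setminus B_{R_0}(0))$. On each half-line $\ell_i$, for $x > R_0$ the test function $\varphi = U_i \eta$ with $\eta$ a cut-off supported in $(R_0,+\infty)$ is admissible (note $\varphi$ need not be continuous across the vertex since its support avoids $0$), and plugging it into $Q$ and integrating by parts against the equation gives the standard inequality
\[
\int (p-2)\rho U_i^{p-2} (U_i\eta)^2 \,dx \le \int U_i^2 (\eta')^2\,dx .
\]
Choosing $\eta$ to be the usual logarithmic-type cut-off, or more simply a family of linear cut-offs on $[T, 2T]$ and letting $T\to\infty$, one concludes that $\int_{\ell_i \cap \{x>R_0\}} U_i^{p} \,dx < +\infty$; since $p>2$ and $U_i>0$ this forces $U_i(x) \to 0$ along a sequence $x\to\infty$, and in fact $\int_{\ell_i} U_i^p\,dx<\infty$. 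From the finiteness of the $L^p$ norm and a Gagliardo--Nirenberg / energy bootstrap on the equation (test the equation against $U_i \eta^2$, control the nonlinear term by H\"older using $\|U_i\|_{L^p}$ on the far region), one upgrades this to $\int_{\ell_i} |U_i'|^2 + \lambda U_i^2 \,dx < +\infty$. Summing over $i=1,\dots,m$ yields $U \in H^1(\cG_m)$.

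Once $U \in H^1(\cG_m)$, the decay $U(x)\to 0$ as $\dist(x,0)\to\infty$ follows on each half-line: $U_i \in H^1([0,+\infty))$ already gives $U_i(x)\to 0$ by the one-dimensional Sobolev embedding $H^1([0,+\infty))\hookrightarrow C_0([0,+\infty))$. (If one wants the cleaner exponential decay, one can additionally note that for $x$ large $U_i^{p-2}(x)$ is small, so $U_i$ is a positive subsolution of $-w''+\tfrac{\lambda}{2}w \le 0$ and a comparison with $e^{-\sqrt{\lambda/2}\,x}$ applies; but this is not needed for the statement.) This completes the proof.

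The step I expect to be the main obstacle is the energy bootstrap that passes from $\int U_i^p < \infty$ (coming from stability) to $U \in H^1$, because the naive test function $U_i\eta^2$ in the equation produces a nonlinear term $\int \rho U_i^{p}\eta^2$ that must be absorbed; one has to be a little careful to first gain smallness of $\|U_i\|_{L^p}$ on the tail (which is automatic once the integral converges) and combine it with the one-dimensional interpolation inequality to close the estimate uniformly in the cut-off parameter. The vertex condition $\sum_i U_i'(0)=0$ plays essentially no role here beyond ensuring that a globally defined $H^1$ function on $\cG_m$ makes sense; all the analytic work is local on the half-lines, away from the compact set $K$.
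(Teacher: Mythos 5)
Your strategy is in effect the paper's own: the authors do not write the proof out, stating only that it is analogous to that of Theorem~2.3 in the cited work of Esposito--Petralla, and that argument is exactly your two-stage scheme (a Caccioppoli-type inequality from stability with test function $U\eta$ giving $\int U^p<\infty$, then an energy estimate giving $U\in H^1$, then decay from the embedding $H^1([0,+\infty))\hookrightarrow C_0([0,+\infty))$). There is, however, one step that does not close as written. To pass from
\[
(p-2)\rho\int U_i^{p}\eta^2\,dx \;\le\; \int U_i^2(\eta')^2\,dx
\]
to $\int U_i^p<\infty$, you must bound the right-hand side uniformly in the cut-off: with a linear cut-off on $[T,2T]$ it equals $T^{-2}\int_T^{2T}U_i^2$, and in one dimension the logarithmic cut-off buys nothing extra, so without an a priori bound on $U_i$ the argument stalls. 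You should first observe that each $U_i$ is automatically bounded: the Hamiltonian $\tfrac12(U_i')^2-\tfrac{\lambda}{2}U_i^2+\tfrac{\rho}{p}U_i^{p}$ is conserved along the half-line and its potential part is coercive for $p>2$, so any positive solution defined on all of $[0,+\infty)$ stays in a bounded region of the phase plane. With $\|U_i\|_{\infty}<\infty$ the linear cut-off gives $T^{-2}\int_T^{2T}U_i^2\le C/T$, and the estimate closes. Conversely, the ``energy bootstrap'' you single out as the main obstacle is actually immediate and needs no Gagliardo--Nirenberg or tail smallness: testing the equation with $U_i\eta^2$ (supported away from the vertex) and using Young's inequality yields
\[
\tfrac12\int (U_i')^2\eta^2\,dx+\lambda\int U_i^2\eta^2\,dx\;\le\;\rho\int U_i^{p}\eta^2\,dx+2\int U_i^2(\eta')^2\,dx,
\]
and the nonlinear term requires no absorption since it is already known to be finite from the first step; as $\lambda>0$, both terms on the left are bounded uniformly in the cut-off, whence $U\in H^1(\cG_m)$ and the decay follow as you indicate.
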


The proof is analogue to the one of \cite[Theorem 2.3]{EspPet}, and hence we omit it.

\begin{remark}
Clearly, by the density of $H^1([0,+\infty)) \cap C_c([0,+\infty))$ in $H^1([0,+\infty))$, any solution with finite Morse index is stable outside a compact set.
\end{remark}

\begin{lemma}\label{lem: pos mi}
Let $U \in H^1(\cG_m)$ be any non-trivial  solution of \eqref{nls on star}. Then its Morse index $m(U)$ is strictly positive.
\end{lemma}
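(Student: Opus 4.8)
The plan is to exhibit a single non-trivial test function on which the quadratic form $Q(\cdot\,;U,\cG_m)$ of \eqref{second differential} is negative; the natural candidate is a compactly supported truncation of $U$ itself, for which a Nehari-type identity makes the sign explicit.

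First I would establish that
\[
\int_{\cG_m}|U'|^2\,dx+\lambda\int_{\cG_m}U^2\,dx=\rho\int_{\cG_m}U^p\,dx.
\]
Since $U\in H^1(\cG_m)$ solves the ODE on each half-line, elliptic regularity gives $U_i\in C^2([0,+\infty))$, and, as $U$ is bounded, the equation yields $U_i''\in L^2(0,+\infty)$, so $U_i\in H^2(0,+\infty)$ and hence $U_i(x),U_i'(x)\to0$ as $x\to+\infty$. Multiplying the equation by $U$, integrating over the $i$-th half-line, and integrating by parts gives
\[
\int_0^{+\infty}|U_i'|^2\,dx+U_i'(0)U_i(0)+\lambda\int_0^{+\infty}U_i^2\,dx=\rho\int_0^{+\infty}U_i^p\,dx,
\]
the boundary term at $+\infty$ vanishing by the decay just obtained. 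Summing over $i=1,\dots,m$, using the continuity of $U$ at the vertex (so $U_i(0)=U(0)$) and the Kirchhoff condition $\sum_i U_i'(0)=0$, the total vertex term $U(0)\sum_i U_i'(0)$ vanishes, yielding the stated identity. Consequently,
\[
Q(U;U,\cG_m)=\int_{\cG_m}|U'|^2\,dx+\lambda\int_{\cG_m}U^2\,dx-(p-1)\rho\int_{\cG_m}U^p\,dx=-(p-2)\rho\int_{\cG_m}U^p\,dx<0,
\]
since $p>2$, $\rho>0$ and $U\not\equiv0$.

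It remains to replace $U$ by a compactly supported function without destroying the negativity. I would fix a family of Lipschitz cut-offs $\eta_R:\cG_m\to[0,1]$ with $\eta_R\equiv1$ on $B_R(0)$, $\eta_R\equiv0$ outside $B_{2R}(0)$, and $|\eta_R'|\le C/R$, and set $\varphi_R:=\eta_R U\in H^1(\cG_m)\cap C_c(\cG_m)$. Expanding $|\varphi_R'|^2=\eta_R^2|U'|^2+2\eta_R\eta_R'UU'+|\eta_R'|^2U^2$, the last two terms are bounded by $CR^{-1}\|U\|_{L^2(B_{2R}(0)\setminus B_R(0))}\|U'\|_{L^2(\cG_m)}+CR^{-2}\|U\|_{L^2(B_{2R}(0)\setminus B_R(0))}^2\to0$ as $R\to\infty$, while $\int_{\cG_m}\eta_R^2|U'|^2\to\int_{\cG_m}|U'|^2$ and, by dominated convergence, $\int_{\cG_m}(\lambda-(p-1)\rho U^{p-2})\eta_R^2U^2\to\int_{\cG_m}(\lambda-(p-1)\rho U^{p-2})U^2$. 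Hence $Q(\varphi_R;U,\cG_m)\to Q(U;U,\cG_m)<0$, so $Q(\varphi_R;U,\cG_m)<0$ for $R$ large; taking $W:=\mathrm{span}\{\varphi_R\}$ shows $m(U)\ge1$.

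The argument is essentially routine: the only points requiring care are the justification that the boundary terms at infinity vanish (i.e.\ the $H^2$-regularity and decay of each $U_i$) and the convergence $Q(\varphi_R;U,\cG_m)\to Q(U;U,\cG_m)$, both of which are standard.
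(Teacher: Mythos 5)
Your proof is correct and follows essentially the same route as the paper: derive the Nehari-type identity from the equation and the Kirchhoff condition, conclude $Q(U;U,\cG_m)=-(p-2)\rho\int_{\cG_m}U^p\,dx<0$, and then pass to a compactly supported test function. The paper simply invokes density of $H^1(\cG_m)\cap C_c(\cG_m)$ in $H^1(\cG_m)$ where you carry out the cut-off computation explicitly, which is a fine (and slightly more self-contained) way to justify the same step.
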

\begin{proof}
Thanks to the Kirchhoff condition, it is not difficult to check that
\[
\int_{\cG_m} \left(|U'|^2 + \lambda U^2\right) dx = \int_{\cG_m} \rho |U|^p\, dx.
\]
Therefore
\[
Q(U; U, \cG_m) = (2-p) \int_{\cG_m} |U|^p\,dx<0,
\]
and the thesis follows by density of $H^1(\cG_m) \cap C_c(\cG_m)$ in $H^1(\cG_m)$.
\end{proof}

\begin{proof}[Proof of Theorem \ref{thm: blow-up 1} under assumption \eqref{int max}] This case is simpler than the one when \eqref{ver max} holds, since, roughly speaking, after rescaling we do not see the vertexes of $\cG$, and we obtain a limit problem on the line. We present in any case the proof for the sake of completeness. Since $\cG$ has a finite number of edges, up to a subsequence all the points $x_n$ belong to same edge ${\rm e}$, and (i) holds. Let $\tilde u_n$ be defined by
\[
\tilde u_n(y) := \tilde \eps_n^{\frac{2}{p-2}} u_n(x_n + \tilde \eps_n y) \quad \text{for }y \in \tilde {\rm e}_n:= \frac{{\rm e}-x_n}{\tilde \eps_n}.
\]
Notice that any interval $[-a,a]$, with $a>0$, is contained in $\tilde {\rm e}_n$ for sufficiently large $n$. Indeed, $({\rm e}-x_n)/\tilde \eps_n$ contains the set
\[
\{y \in \R: \ |\tilde \eps_n y|< {\rm dist}(x_n, \mathcal{V})\} = \left\{y \in \R: \ |y|< \frac{ {\rm dist}(x_n, \mathcal{V})}{\tilde \eps_n}\right\},
\]
which exhausts the whole line $\R$ as $n \to \infty$, by \eqref{int max}.

Now, on every compact $[-a,a]$ we have that $\tilde u_n(0) = 1 = \max_{[-a,a]} \tilde u_n$ for $n$ large (since $u_n(x_n) = \max_{B_{R_n \tilde \eps_n}(x_n)} u_n$ for some $R_n \to +\infty$), and
\[
-\tilde u_n'' + \tilde \eps_n^2 \lambda_n \tilde u_n = \rho_n \tilde u_n^{p-1}, \quad \tilde u_n >0 \quad \text{in $\tilde {\rm{e}}_n$}.
\]
Furthermore, by Lemma \ref{lem: stima max}
\[
\tilde \eps_n^2 \lambda_n \in (0,1], \quad \forall n.
\]
Thus, by elliptic estimates, we have that $\tilde u_n \to \tilde u$ in $C^2_{\rm loc}(\R)$, and the limit $\tilde u$ solves
\begin{equation}\label{limit pb}
- \tilde u''+\tilde \lambda \tilde u = \tilde u^{p-1}, \quad \tilde u  \ge 0 \quad \text{in $\R$}
\end{equation}
for some $\tilde \lambda \in [0,1]$. By local uniform convergence, $\tilde u(0) = 1$, and hence $\tilde u>0$ in $\R$ by the strong maximum principle. We claim that
\begin{equation}\label{cl: fin morse}
\text{the Morse index of $\tilde u$ is bounded by $\bar k$}.
\end{equation}
If by contradiction this is false, then there exists $k> \bar k$ functions $\phi_1,\dots,\phi_k \in H^1(\R) \cap C_c(\R)$, linearly independent in $H^1(\R)$, 
such that $Q(\phi_i; \tilde u, \R)<0$ for every $i \in \{1, \cdots, k\}$. Let then
\[
\phi_{i,n}(x)= \tilde \eps_n^{\frac{1}{2}} \phi_i\left(\frac{x-x_n}{\tilde \eps_n}\right)
\]
Since $\phi_i$ has compact support, the functions $\phi_{i,n}$ can be regarded as functions in $H^1({\rm e})$, and hence in $H^1(\cG)$, for every $n$ large, thanks to \eqref{int max}. Indeed, if ${\rm supp} \, \phi_i \subset [-M,M]$, then
\begin{align*}
\left\{x \in \R: \ \frac{x-x_n}{\tilde \eps_n} \subset [-M,M]\right\} &= [x_n-\tilde \eps M, x_n + \tilde \eps_n M] \\
&\subset \left[x_n-\tilde \eps_n \frac{ {\rm dist}(x_n, \mathcal{V})}{\tilde \eps_n}, x_n + \tilde \eps_n \frac{ {\rm dist}(x_n, \mathcal{V})}{\tilde \eps_n}\right] \subset {\rm e}.
\end{align*}
Moreover, $\phi_{1,n},\dots,\phi_{k,n}$ are linearly independent in $H^1(\cG)$, and, by scaling,
\[
Q(\phi_{i,n}; u_n, \cG) = Q(\phi_{i,n}; u_n, {\rm e}) = Q(\phi_i; \tilde u_n, \tilde {\rm e}_n) \to Q(\phi_i; \tilde u, \R)<0.
\]
This implies that $m(u_n) \ge k> \bar k$ for sufficiently large $n$, a contradiction. Therefore, claim \eqref{cl: fin morse} is proved.
%
%
To sum up, $\tilde u$ is a finite Morse index non-trivial solution to \eqref{limit pb}, for some $\tilde \lambda \in [0,1]$. Having $\tilde \lambda=0$ is however not possible, since by phase plane analysis the equation $\tilde u'' + \tilde u^{p-1}=0$ in $\R$ has only periodic sign-changing solution, but the trivial one. Now, by Lemma \ref{lem: dec fmi}, $\tilde u \to 0$ as $|x| \to +\infty$, and $\tilde u \in H^1(\R)$. Therefore,
\begin{equation}\label{rel max lam 1}
0<\liminf_{n \to \infty} \frac{\lambda_n}{(u_n(x_n))^{p-2}} \le \limsup_{n \to \infty} \frac{\lambda_n}{(u_n(x_n))^{p-2}} \le 1,
\end{equation}
which proves the first estimate in \eqref{rel eps tilde eps}. At this point it is equivalent, but more convenient, to work with $v_n$ defined by \eqref{bu int} rather than with $\tilde u_n$. By \eqref{int max} and \eqref{rel max lam 1},
\[
\limsup_{n \to \infty} \frac{{\rm dist}(x_n,\mathcal{V})}{\eps_n} = +\infty.
\]
Thus, similarly as done before, one can show that $v_n$ converges to a limit function $v$ in $C^2_{\rm loc}(\R)$, such that
\[
- v''+v = v^{p-1} \quad v  \ge 0 \quad \text{in $\R$};
\]
moreover, $v$ has a positive global maximum $v(0) \ge 1$ (thus $v>0$ in $\R$), has finite Morse index $m(v) \le \bar k$, and hence, by Lemma \ref{lem: dec fmi}, $v \to 0$ as $|x| \to \infty$, and $v \in H^1(\R)$. It is well known that there exists only one such solution, denoted by $V$. Thus, (ii) is proved. Point (iv) follows directly by local uniform convergence. Finally, point (iii) is a consequence of the fact that the Morse index of $V$ is positive (see Lemma \ref{lem: pos mi}; in fact, it is well known that in fact $m(V)$ is precisely equal to $1$). This implies that there exists $\phi \in C^1_c(\R)$ such that $Q(\phi; V,\R)<0$; thus, defining
\[
\phi_{i,n}(x)= \eps_n^{\frac{1}{2}} \phi_i\left(\frac{x-x_n}{\eps_n}\right),
\]
we deduce that for sufficiently large $n$ we have $Q(\phi_{i,n};u_n, \cG)<0$, and ${\rm supp}\, \phi_{i,n} \subset B_{\bar R \eps_n}(x_n)$ for some $\bar R>0$.
\end{proof}

\begin{proof}[Proof of Theorem \ref{thm: blow-up 1} under assumption \eqref{ver max}]
Since $\tilde \eps_n \to 0$ and $\cG$ has a finite number of vertexes and edges, up to a subsequence the maximum points $x_n$ converge to a vertex ${\rm v}$, and belong to same edge ${\rm e}_1 \simeq [0, \ell_1]$; thus, ($i'$) holds, and we can suppose that
\[
\frac{d_n}{\tilde \eps_n} \to \eta \in [0,+\infty), \quad d_n:={\rm dist}(x_n,\mathcal{V}) = x_n.
\]
Let
\[
\tilde u_n(y):= \tilde \eps_n^{\frac{2}{p-2}} u_n(\tilde \eps_n y) \quad \text{for }y \in \tilde {\rm e}_{i,n}:= \frac{{\rm e}_i}{\tilde \eps_n}, \ \text{for $i=1,\dots,m$}.
\]
Note that $\tilde u_n$ is defined on a graph $\cG_{m,n}$ consisting in $m$ expanding edges, glued together at their common origin, which is identified with the coordinate $0$ on each edge $\tilde e_{i,n}$. In the limit $n \to \infty$, this graph converges to the star-graph $\cG_m$. Plainly, for every $a>\eta +1$ and large $n$
\[
\tilde u_n\left(\frac{x_n}{\tilde \eps_n}\right) = 1 = \max_{B_a(0)} \tilde u_n
\]
(since $u_n(x_n) = \max_{B_{R_n \tilde \eps_n}(x_n)} u_n$ for some $R_n \to +\infty$),
\[
-\tilde u_n'' + \tilde \eps_n^2 \lambda_n \tilde u_n = \rho_n \tilde u_n^{p-1}, \quad \tilde u_n >0
\]
on any edge of $\cG_{m,n}$, and the Kirchhoff condition at the origin holds. Also, by Lemma \ref{lem: stima max},
\[
\tilde \eps_n^2 \lambda_n \in (0,1] \quad \forall n.
\]
Thus, by elliptic estimates, we have that $\tilde u_n|_{\tilde {\rm e}_{i,n}}=: \tilde u_{i,n} \to \tilde u_i$ in $C^2_{\rm loc}([0,+\infty))$ for every $i$, and the limit $\tilde u_i$ solves
\begin{equation}\label{limit pb 2}
- \tilde u''_i+\tilde \lambda \tilde u_i = \tilde u_i^{p-1}, \quad \tilde u_i  \ge 0 \quad \text{in $(0,+\infty)$}
\end{equation}
for some $\tilde \lambda \in [0,1]$. Moreover, since $\tilde u_{n}$ is continuous on $\cG_{m,n}$ and by uniform convergence, $\tilde u_i(0) = \tilde u_j(0)$ for every $i \neq j$, so that $\tilde u \simeq (\tilde u_1,\dots, \tilde u_m)$ can be regarded as a function defined on $\cG_m$. Since the convergence $\tilde u_{i,n} \to \tilde u_i$ takes place in $C^2$ up to the origin, also the Kirchhoff condition passes to the limit. Now we exclude the case that $\tilde u \equiv 0$ on some half-line of $\cG_m$. By local uniform convergence, we have that
\[
\tilde u_1(\rho) = \lim_{n \to \infty} \tilde u_{1,n}\left(\frac{d_n}{\tilde \eps_n}\right) = 1.
\]
This implies that $\tilde u_1>0$ in $(0,+\infty)$, by the strong maximum principle. In turn, the Kirchhoff condition, the uniqueness theorem for ODEs, and the strong maximum principle again, ensure that $\tilde u_i>0$ on $(0,+\infty)$ for every $i$. Finally, we claim that
\begin{equation}\label{cl: fin morse 2}
\text{the Morse index of $\tilde u$ is bounded by $\bar k$}.
\end{equation}
The proof of this claim is completely analogue to the one of \eqref{cl: fin morse}. If by contradiction this is false, then there exists $k> \bar k$ functions $\phi_1,\dots,\phi_k \in H^1(\cG_m) \cap C_c(\cG_m)$, linearly independent in $H^1(\cG_m)$, such that $Q(\phi_i; \tilde u, \cG_m)<0$ for every $i \in \{1, \cdots, k\}$. Let then
\[
\phi_{i,n}(x)= \tilde \eps_n^{\frac{1}{2}} \phi_i\left(\frac{x}{\tilde \eps_n}\right).
\]
Since $\phi_i$ has compact support, the functions $\phi_{i,n}$ can be regarded as functions in $H^1(\cG)\cap C_c(\cG)$ for every $n$ large; precisely, ${\rm supp}(\phi_{i,n}) \subset B_{R \tilde \eps_n}(x_n)$ for some $R>2\rho$. Moreover, $\phi_{1,n},\dots,\phi_{k,n}$ are linearly independent in $H^1(\cG_m)$ and, by scaling,
\[
Q(\phi_{i,n}; u_n, \cG) = Q(\phi_{i,n}; u_n, {\rm e}) = Q(\phi_i; \tilde u_n, \tilde {\rm e}_n) \to Q(\phi_i; \tilde u, \R)<0.
\]
This implies that $m(u_n) \ge k> \bar k$ for sufficiently large $n$, a contradiction. Therefore, claim \eqref{cl: fin morse 2} is proved.

To sum up, $\tilde u$ is a finite Morse index non-trivial solution to \eqref{limit pb 2}, for some $\tilde \lambda \in [0,1]$. As before, the case $\tilde \lambda=0$ can be ruled out by phase-plane analysis, and hence, by Lemma \ref{lem: dec fmi}, $\tilde u \to 0$ as $|x| \to +\infty$, and $\tilde u \in H^1(\cG_m)$. Therefore,
\begin{equation}\label{rel max lam}
0<\liminf_{n \to \infty} \frac{\lambda_n}{(u_n(x_n))^{p-2}} \le \limsup_{n \to \infty} \frac{\lambda_n}{(u_n(x_n))^{p-2}} \le 1,
\end{equation}
which proves the first estimate in \eqref{rel eps tilde eps 2}. At this point it is equivalent, but more convenient, to work with $v_n$ defined in point ($ii'$) of the theorem, rather than with $\tilde u_n$. By \eqref{ver max} and \eqref{rel max lam},
\[
\limsup_{n \to \infty} \frac{{\rm dist}(x_n,\mathcal{V})}{\eps_n} <+\infty.
\]
Thus, similarly as done before, one can show that $v_n$ converges, in $C^0_{\rm loc}(\cG_m)$ and in $C^2_{{\rm loc}}([0,+\infty))$ on every half-line, to a limit function $V\simeq(V_1,\dots,V_m)$, which solves
\begin{equation}\label{limit pb v 2}
\begin{cases}
- V''+V = V^{p-1}, \quad V  \ge 0 & \text{in $\cG_m$}, \\
\sum_{i=1}^m V_i'(0^+) = 0;
\end{cases}
\end{equation}
furthermore, $V$ has a positive global maximum on the half-line $\ell_1$, $V_1(\bar x) \ge 1$ (thus $V>0$ in $\cG_m$), and has finite Morse index $m(V) \le \bar k$. Moreover, by Lemma \ref{lem: dec fmi}, $V \to 0$ as $|x| \to \infty$. Thus, ($ii'$) is proved. Point ($iv'$) follows directly by local uniform convergence. Finally, point ($iii'$) is a consequence of Lemma \ref{lem: pos mi}. This implies that there exists $\phi \in H^1(\cG_m) \cap C_c(\cG_m)$ such that $Q(\phi; V,\cG_m)<0$; thus, defining
\[
\phi_{i,n}(x)= \eps_n^{\frac{1}{2}} \phi_i\left(\frac{x-x_n}{\eps_n}\right),
\]
it is not difficult to deduce that for sufficiently large $n$ we have $Q(\phi_{i,n};u_n, \cG)<0$, and ${\rm supp}\, \phi_{i,n} \subset B_{\bar R \eps_n}(x_n)$ for some positive $\bar R$.
\end{proof}

Theorem \ref{thm: blow-up 1} allows to describe the pointwise blow-up behavior close to local maximum points. In what follows, we focus on the global behavior, and, in particular, on what happens far away from local maxima.

\begin{theorem}\label{thm: blow-up 2}
Let $\{u_n\} \subset H^1(\cG)$ be a sequence of solutions to \eqref{eq: blow-up} such that $\lambda_n \to +\infty$ and $m(u_n) \le \bar k$ for some $\bar k \ge 1$. There exist $k \in \{1,\dots,\bar k\}$, and sequences of points $\{P_n^1\}$, \dots, $\{P_n^k\}$, such that
\begin{gather}
\lambda_n \dist(P_n^i, P_n^j) \to +\infty, \quad \forall i \neq j, \label{36ep} \\
u_n(P_n^i) = \max_{B_{R_n \lambda_n^{-1/2}}(P_n^i)} u_n \quad \text{for some $R_n \to +\infty$, for every $i$,} \label{37ep}
\end{gather}
and constants $C_1, C_2>0$ such that
\begin{equation}\label{estim far away}
u_n(x) \le C_1 \lambda_n^{\frac{1}{p-2}} \sum_{i=1}^k e^{-C_2 \lambda_n^{\frac{1}{2}} \dist(x, P_n^i)}+C_1 \lambda_n^{\frac{1}{p-2}}\sum_{j=1}^{h} e^{-C_2 \lambda_n^{\frac{1}{2}} \dist(x, {\rm v}_j )},   \quad \forall x \in \cG \setminus \bigcup_{i=1}^k B_{R \lambda_n^{-1/2}} (P_n^i),
\end{equation}
where ${\rm v}_1, \dots, {\rm v}_h$ are all the vertexes of $\cG$.
%
\end{theorem}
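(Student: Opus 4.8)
The plan is to run a standard "bubble extraction" procedure based on Theorem \ref{thm: blow-up 1}, iterated at most $\bar k$ times because the Morse index is bounded. First I would fix a small $\delta>0$ and, for each $n$, consider the quantity $\max_{x \in \cG} u_n(x) \dist(x,\mathcal V)^{2/(p-2)}$ rescaled appropriately; more concretely, I would pick a first local maximum point $P_n^1$ realising $\max_\cG u_n$. By Lemma \ref{lem: stima max}, $u_n(P_n^1) \ge \lambda_n^{1/(p-2)} \to +\infty$, so $\tilde\eps_n^1 := u_n(P_n^1)^{-(p-2)/2}\to 0$, and \eqref{hp max} holds trivially with $x_n = P_n^1$ and $R_n$ chosen so that $R_n \tilde\eps_n^1 \to 0$ but $R_n \to +\infty$ (possible since the diameter of each edge is positive, or for non-compact $\cG$ one uses that distances to other bubbles diverge). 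Applying Theorem \ref{thm: blow-up 1} (in one of the two regimes, according to whether $\dist(P_n^1,\mathcal V)/\tilde\eps_n^1$ stays bounded or not) gives the local profile $V$ and a test function $\phi_n^1$ supported in $B_{\bar R \eps_n}(P_n^1)$ with $Q(\phi_n^1;u_n,\cG)<0$, where $\eps_n = \lambda_n^{-1/2}$. Crucially \eqref{rel eps tilde eps}/\eqref{rel eps tilde eps 2} give $\tilde\eps_n^1 \simeq \eps_n$, so the bubble lives at scale $\lambda_n^{-1/2}$, which is why \eqref{37ep} is stated with $R_n \lambda_n^{-1/2}$.

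Next comes the inductive step: having extracted $P_n^1,\dots,P_n^{j}$ with mutually diverging rescaled distances $\lambda_n\dist(P_n^a,P_n^b)\to+\infty$ and disjoint bubble test functions $\phi_n^1,\dots,\phi_n^j$ with negative $Q$, I ask whether $u_n$ is already controlled away from these bubbles (and the vertexes) by the exponential bound \eqref{estim far away}. If yes, stop with $k=j$. If not, there is a sequence $Q_n$ with $u_n(Q_n) \dist(Q_n,\{P_n^1,\dots,P_n^j\}\cup\mathcal V)^{2/(p-2)}$ large — equivalently $u_n(Q_n)$ is much larger than $\lambda_n^{1/(p-2)}$ at a point whose rescaled distance to all previous centres and to all vertexes diverges. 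Moving to a new local maximum point $P_n^{j+1}$ of $u_n$ restricted to a suitable shrinking ball around $Q_n$ (an argument à la the Gidas–Spruck / De Marchi rescaling lemma, using that the maximum of $u_n(x)[\dist(x,\text{centres}\cup\mathcal V)]^{2/(p-2)}$ is attained), I check that the hypotheses of Theorem \ref{thm: blow-up 1} hold at $P_n^{j+1}$ with $R_n\to+\infty$, producing a new bubble test function $\phi_n^{j+1}$ supported in $B_{\bar R\eps_n}(P_n^{j+1})$. Because $\lambda_n\dist(P_n^{j+1},P_n^a)\to+\infty$ for $a\le j$, the support of $\phi_n^{j+1}$ is disjoint from those of $\phi_n^1,\dots,\phi_n^j$ for $n$ large, so $\phi_n^1,\dots,\phi_n^{j+1}$ are linearly independent and span a subspace on which $Q(\,\cdot\,;u_n,\cG)$ is negative definite; hence $m(u_n)\ge j+1$. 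Since $m(u_n)\le\bar k$, the procedure terminates after $k\le\bar k$ steps, which yields \eqref{36ep} and \eqref{37ep}.

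It remains to prove the exponential decay estimate \eqref{estim far away} once the extraction has stopped. By construction, at the final stage $\tilde u_n(x) := \lambda_n^{-1/(p-2)} u_n(x)$ stays bounded, uniformly in $n$, on $\cG \setminus \bigcup_i B_{R\eps_n}(P_n^i)$ — indeed if not we could extract one more bubble. Setting $w_n(x) = u_n(x)$ and using $-w_n'' + \lambda_n w_n = \rho_n w_n^{p-1} \le \rho_n (\sup \tilde u_n)^{p-2}\lambda_n\, w_n =: \theta \lambda_n w_n$ with $\theta < 1$ on that region (shrinking $R$ or enlarging the excised balls a little, and using that $\rho_n \to 1$, to make the constant strictly less than $1$), one gets $-w_n'' + (1-\theta)\lambda_n w_n \le 0$ there. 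Then a comparison argument on each edge against the explicit one-dimensional barrier $x \mapsto M\big(e^{-c\sqrt{\lambda_n}\,\dist(x,P)} \big)$ built from the exponentials $e^{\pm\sqrt{(1-\theta)\lambda_n}\,x}$, together with the boundary data on $\pa B_{R\eps_n}(P_n^i)$ (where $u_n \le C\lambda_n^{1/(p-2)}$ by point ($iv$) of Theorem \ref{thm: blow-up 1}, or directly by the local profile bound $v_n \le \|V\|_\infty + o(1)$) and at the vertexes, and using the Kirchhoff condition to patch the barriers across vertexes, gives \eqref{estim far away} with $C_2 = \tfrac12\sqrt{1-\theta}$, say; the vertex terms $e^{-C_2\sqrt{\lambda_n}\dist(x,{\rm v}_j)}$ appear because the vertexes carry their own (bounded) boundary data that must be propagated. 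The main obstacle is precisely making the iterative extraction rigorous on a metric graph — in particular the rescaling lemma that produces a genuine local maximum $P_n^{j+1}$ satisfying \eqref{hp max} with $R_n\to+\infty$ while keeping its rescaled distance to all previous centres and to $\mathcal V$ divergent (so that Theorem \ref{thm: blow-up 1} applies), and checking that the finitely many edges/vertexes do not interfere; the comparison/barrier argument for the decay is routine once this is in place, modulo the bookkeeping of patching barriers through vertexes via Kirchhoff.
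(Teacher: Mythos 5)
Your overall strategy is the same as the paper's: both follow the Esposito--Petralla scheme of iteratively extracting concentration points, using the disjointly supported test functions from Theorem \ref{thm: blow-up 1}(iii)/(iii') to show that each new bubble raises the Morse index by one (hence $k\le\bar k$), and then proving \eqref{estim far away} by an edge-by-edge comparison with exponential barriers. However, there is a genuine gap in how you select new concentration points. You weight the selection by $\dist(\cdot,\{P_n^1,\dots,P_n^j\}\cup\mathcal V)^{2/(p-2)}$ and explicitly require the new point $P_n^{j+1}$ to have \emph{divergent rescaled distance to $\mathcal V$} ``so that Theorem \ref{thm: blow-up 1} applies.'' This misreads Theorem \ref{thm: blow-up 1}: it does \emph{not} require \eqref{int max}; its second half (hypothesis \eqref{ver max}, conclusions ($i'$)--($iv'$)) is designed precisely for concentration at a vertex, where the limit profile lives on a star graph $\cG_m$. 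With your selection rule, a bubble sitting at distance $O(\lambda_n^{-1/2})$ from a vertex ${\rm v}$ is invisible (there $u_n\sim\lambda_n^{1/(p-2)}$ and $\dist(\cdot,\mathcal V)\sim\lambda_n^{-1/2}$, so your weighted quantity stays bounded), hence it is never extracted and ${\rm v}$ is not among the $P_n^i$.

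This is not a harmless omission, because the comparison argument in your last step needs two things that fail across an unextracted vertex bubble: (a) the smallness $\lambda_n^{-1/(p-2)}u_n\le\theta^{1/(p-2)}<1$ on the region where you run the barrier (inside a vertex bubble the rescaled profile $V$ satisfies $\max V\ge 1$, so $-u_n''+(1-\theta)\lambda_n u_n\le 0$ is false there), and (b) the boundary datum $u_n({\rm v}_j)\le\eps\lambda_n^{1/(p-2)}$ at the vertexes, which is what generates the terms $e^{-C_2\lambda_n^{1/2}\dist(x,{\rm v}_j)}$ in \eqref{estim far away}. The paper avoids this by proving first the vanishing property
\[
\lim_{R\to+\infty}\Bigl(\limsup_{n\to\infty}\lambda_n^{-\frac{1}{p-2}}\max_{d_n(x)\ge R\lambda_n^{-1/2}}u_n(x)\Bigr)=0,
\qquad d_n(x)=\min_i\dist(x,P_n^i),
\]
where $d_n$ measures the distance to the $P_n^i$ \emph{only}: this forces every concentration region, including those at vertexes, to be captured by some $P_n^i$ (via the star-graph case of Theorem \ref{thm: blow-up 1}), and simultaneously supplies both the coercivity $-u_n''+\tfrac{\lambda_n}{2}u_n\le 0$ and the small vertex data needed for the barriers. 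To repair your argument, drop the weighting by $\dist(\cdot,\mathcal V)$, take the new point as a maximum of $u_n$ on $\cG\setminus\bigcup_i B_{R\lambda_n^{-1/2}}(P_n^i)$, and invoke the \eqref{ver max} branch of Theorem \ref{thm: blow-up 1} when that maximum accumulates at a vertex. The remainder of your comparison argument (case analysis on the endpoints of the components of $\edge\cap A_n$, Kirchhoff patching) then matches the paper's Step 2.
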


\begin{proof}
The proof follows closely the one of \cite[Theorem 3.2]{EspPet}, and is divided into two steps.

\smallskip

\emph{Step 1) There exist $k \in \{1,\dots,\bar k\}$, and sequences of points $\{P_n^1\}$, \dots, $\{P_n^k\}$, such that \eqref{36ep} and \eqref{37ep} hold, and moreover
\begin{equation}\label{39ep}
\lim_{R \to +\infty} \left( \limsup_{n \to \infty} \  \lambda_n^{-\frac{1}{p-2}} \max_{d_n(x) \ge R \lambda_n^{-1/2}} u_n(x) \right) = 0.
\end{equation}
where $d_n(x) =\min\{ \dist(x,P_n^i): \ i=1,\dots,k\}$ is the distance function from $\{P_1^n, \dots, P_k^n\}$.}

Thanks to Theorem \ref{thm: blow-up 1}, we can adapt the proof of \cite[Theorem 3.2]{EspPet} with minor changes (some details are actually simpler in the present setting, since here we deal with a constant potential, differently to \cite{EspPet}). In adapting Theorem 3.2 from \cite{EspPet}, it is important to point out that any limit of $u_n$, given by Theorem \ref{thm: blow-up 1}, tends to $0$ at infinity. This fact is crucial in the proof of \eqref{39ep}.

Moreover, if the reference graph is unbounded, it is important to observe that $u_n(x) \to 0$ as $|x| \to +\infty$ on each half-line, since $u_n \in H^1(\cG)$ by assumption. This implies that, if $\{P_n^1\}$, \dots, $\{P_n^h\}$ are local maximum points of $u_n$, then there exists a maximum point on $\cG \setminus \bigcup_{i=1}^h B_{R \lambda_n^{-1/2}}(P_n^i)$.

\medskip

\emph{Step 2) Conclusion of the proof.} By \eqref{39ep}, for every $\eps \in (0,1)$ small, to be chosen later, there exist $R>0$ and $n_R \in \mathbb{N}$ large such that
\begin{equation}\label{max dn grande}
\max_{d_n(x) > R \lambda_n^{-1/2}} u_n(x) \le \lambda_n^{\frac1{p-2}} \eps, \quad \forall n \ge n_R.
\end{equation}
Thus, in the set $A_n:= \{d_n(x) > R \lambda_n^{-1/2}\}$, in addition to \eqref{max dn grande} we also have that
\begin{equation}\label{eq u lontano}
u_n'' = (\lambda_n- u_n^{p-2}) u_n \quad \implies \quad  -u_n'' + \frac{\lambda_n}{2} u_n \le 0
\end{equation}
provided that $\eps>0$ is small enough.

We want to exploit \eqref{max dn grande} and \eqref{eq u lontano} in a comparison argument, as in \cite{EspPet} (or \cite[Theorem 3.1]{Espetal}). However, the presence of the vertexes makes the argument a little bit more involved in our setting.

Let us denote by $\{{\rm v}_j\}_{j=1}^{h_1}$ the set of vertexes which are not included in one of the balls $B_{R \lambda_n^{-1/2}}(P^i_n)$ for large $n$. On any such vertex, by \eqref{max dn grande},
\begin{equation}\label{L1}
u_n({\rm v}_j) \leq \lambda_n^{\frac{1}{p-2}} \varepsilon.
\end{equation}
For any edge $\edge$, we consider the restriction of $u_n$ on $\edge \cap A_n$. Since $k$ is independent of $n$, $\edge \cap A_n$ consists in finitely many relatively open intervals (which may be unbounded, if $\cG$ is non-compact).

Let $I_n$ be any such \emph{bounded} interval; then the following alternative holds: $\pa I_n \cap \{{\rm v}_j\}_{j=1}^{h_1}$ can either be empty (case 1), or be a single vertex , say ${\rm v}_1$ (case 2), or be a pair of vertexes, say ${\rm v}_1$ and ${\rm v}_2$ (case 3).

Assume at first that case 1 holds. Then there exist two indexes $i, j \in\{1,\dots,k\}$ such that $\pa I_n$ consists in one point at distance $R \lambda_n^{-1/2}$ from $P^i_n$, and one point at distance $R \lambda_n^{-1/2}$ from $P^j_n$. Consider the function
\[
\phi_n(x) = e^{- \gamma \, \lambda_n^{\frac{1}{2}} |x-P^i_n|} + e^{- \gamma \, \lambda_n^{\frac{1}{2}} |x-P^j_n|},
\]
which solves $\phi_n''=\gamma^2 \lambda_n \phi_n$ in $I_n$. By taking $\gamma<1/4$, we have that
\[
-\phi_n''+\frac{\lambda_n}{2} \phi_n \ge 0 \quad \text{in $I_n$}.
\]
Moreover,
\[
\left.\left(e^{\gamma R} \lambda_n^{\frac{1}{p-2}} \phi_n - u_n\right)\right|_{\pa I_n} \ge \lambda_n^{\frac{1}{p-2}} (1-\eps) >0,
\]
and hence, by the comparison principle, we have that
\[
u(x) \le e^{\gamma R} \lambda_n^{\frac{1}{p-2}} \phi_n(x), \qquad \forall x \in I_n,
\]
which clearly implies the validity of the thesis on $I_n$ in this case.

If case 2 holds, then there exists an index $i \in \{1,\dots,k\}$ such that $\pa I_n$ consists in a point at distance $R \lambda_n^{-1/2}$ from $P^i_n$, plus the vertex ${\rm v}_1$. Arguing as before, it is not difficult to check that
\[
u(x) \le e^{\gamma R} \lambda_n^{\frac{1}{p-2}} e^{- \gamma \, \lambda_n^{\frac{1}{2}} |x-P^i_n|} + \lambda_n^{\frac{1}{p-2}} e^{- \gamma \, \lambda_n^{\frac{1}{2}} |x-{\rm v}_1|}, \quad \forall x \in I_n,
\]
which gives the thesis in case 2.

In case 3, an analogue argument ensures that
\[
u(x) \le  \lambda_n^{\frac{1}{p-2}} e^{- \gamma \, \lambda_n^{\frac{1}{2}} |x-{\rm} v_1|} + \lambda_n^{\frac{1}{p-2}} e^{- \gamma \, \lambda_n^{\frac{1}{2}} |x-{\rm v}_2|}, \quad \forall x \in I_n,
\]
whence the thesis follows once again.


Finally, let us consider the case when $I_n$ is an \emph{unbounded} interval of $\edge \cap A_n$. Then we only have two possibilities: either $\pa I_n$ consists in a point at distance $R \lambda_n^{-1/2}$ from $P^i_n$, or $\pa I_n$ consists in a vertex, say ${\rm v}_1$.

In the former case, we argue as before with the comparison function
\[
\psi_n(x) = e^{-\gamma R} \lambda_n^{\frac1{p-2}} e^{- \gamma \, \lambda_n^{\frac{1}{2}}|x-P^i_n|},
\]
where $\gamma<1/4$. In the latter one, we can use
\[
\psi_n(x) =  \lambda_n^{\frac1{p-2}} e^{- \gamma \, \lambda_n^{\frac{1}{2}}|x-{\rm v}_1|}.
\]

To sum up, slightly modifying the choice of the comparison functions, according to the structure of $\pa I_n$, it is possible to prove the validity of \eqref{estim far away} in all the possible cases.
\end{proof}

\section{Mountain pass solution for the original problem}\label{sec: ex mp}

In this section we complete the proof of the main existence result, Theorem \ref{thm: main ex}. Let $\mu \in (0, \mu_1)$. As already anticipated in Section \ref{sec: blow-up}, Proposition \ref{prop: ex for ae rho} gives a sequence of mountain pass critical points $u_{\rho_n} \in H^1_\mu(\cG)$ of $E_{\rho_n}(\cdot\,, \cG)$ on $H^1_\mu(\cG)$ with $\rho_n \to 1^-$ and $m(u_{\rho_n}) \le 2$. Moreover, the energy level $c_{\rho_n}$ is bounded, since
\[
E_1(\kappa_\mu, \cG) \le E_\rho(\kappa_\mu, \cG) \le c_{\rho} \le c_{1/2}, \qquad \forall \rho \in \left[\frac12, 1 \right]
\]
(the first and the second inequalities are proved in Lemma \ref{Lem2-20-1}; the third one follows directly from the monotonicity of $c_{\rho}$). Thus, Theorem \ref{thm: main ex} is a direct corollary of the next statement.

\begin{proposition}\label{thm: lambda bdd}
Let $\cG$ be a metric graph, $\{u_n\} \subset H^1(\cG)$ a sequence of solutions to \eqref{eq: blow-up} for some $\lambda_n \in \R$ and $\rho_n \to 1$. Suppose that
\[
\int_{\cG} |u_n|^2\,dx = \mu, \quad m(u_n) \le \bar k, \qquad \forall n,
\]
for some $\mu>0$ and $\bar k \in \mathbb{N}$, and that
\[
\text{the sequence of the energy levels $\{c_n:=E_{\rho_n}(u_n, \cG)\}$ is bounded}.
\]
Then the sequences $\{\lambda_n\} \subset \R$ and $\{u_n\} \subset H^1(\cG) $  must be bounded. In addition, $\{u_n\}$ is a (bounded) Palais-Smale sequence for $E_1(\cdot\,, \cG)$ constrained on $H^1_\mu(\cG)$.
\end{proposition}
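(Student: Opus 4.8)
The plan is to argue by contradiction, assuming that $\{\lambda_n\}$ is unbounded, and to use the blow-up analysis of Theorems \ref{thm: blow-up 1} and \ref{thm: blow-up 2} to contradict the mass constraint $\int_\cG u_n^2\,dx=\mu>0$. First I would record two elementary identities. Testing the equation in \eqref{eq: blow-up} with $u_n$, integrating by parts edge by edge and using the Kirchhoff condition, yields $\int_\cG|u_n'|^2\,dx+\lambda_n\mu=\rho_n\int_\cG u_n^p\,dx$; on the other hand the definition $c_n=E_{\rho_n}(u_n,\cG)$ gives $\int_\cG|u_n'|^2\,dx=2c_n+\tfrac{2\rho_n}{p}\int_\cG u_n^p\,dx$, and combining the two one finds $\lambda_n\mu=\rho_n\tfrac{p-2}{p}\int_\cG u_n^p\,dx-2c_n$. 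Since $\int_\cG u_n^p\,dx\ge0$, $\rho_n\to1$ and $\{c_n\}$ is bounded, this bounds $\{\lambda_n\}$ from below, and it shows that the conditions ``$\{\lambda_n\}$ bounded'', ``$\{\int_\cG u_n^p\,dx\}$ bounded'' and ``$\{u_n\}$ bounded in $H^1(\cG)$'' are all equivalent (recall that $\|u_n\|_{L^2(\cG)}^2=\mu$ is fixed). Hence it suffices to rule out that $\lambda_n\to+\infty$ along a subsequence.

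Suppose then $\lambda_n\to+\infty$ (along a subsequence, not relabelled) and set $\eps_n:=\lambda_n^{-1/2}$. Theorem \ref{thm: blow-up 2} provides $k\le\bar k$ families of local maximum points $P_n^1,\dots,P_n^k$, with mutual distances $\gg\eps_n$, satisfying \eqref{37ep}, and, for a suitable fixed $R>0$, the exponential decay estimate \eqref{estim far away} on $A_n:=\cG\setminus\bigcup_{i=1}^k B_{R\eps_n}(P_n^i)$. For $n$ large the balls $B_{R\eps_n}(P_n^i)$ are pairwise disjoint, so $\mu=\sum_{i=1}^k\int_{B_{R\eps_n}(P_n^i)}u_n^2\,dx+\int_{A_n}u_n^2\,dx$, and the contradiction will follow once I show that every term on the right vanishes as $n\to\infty$. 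For the terms near the $P_n^i$: Lemma \ref{lem: stima max} gives $u_n(P_n^i)\ge\lambda_n^{1/(p-2)}$, hence the natural scale $\tilde\eps_n=(u_n(P_n^i))^{-(p-2)/2}$ at $P_n^i$ satisfies $\tilde\eps_n\le\eps_n$, and together with \eqref{37ep} this verifies \eqref{hp max}; thus Theorem \ref{thm: blow-up 1} applies at $x_n=P_n^i$ (after a further subsequence, according to whether \eqref{int max} or \eqref{ver max} holds), and its conclusion (iv) (resp.\ (iv$'$)) with $q=2$ gives $\lambda_n^{\frac12-\frac{2}{p-2}}\int_{B_{R\eps_n}(P_n^i)}u_n^2\,dx\to\int_{B_R}V^2\,dy<+\infty$, the $L^2$-mass of the limiting profile on a ball of radius $R$. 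Since $p>6$, the exponent $\tfrac{2}{p-2}-\tfrac12$ is strictly negative, so $\int_{B_{R\eps_n}(P_n^i)}u_n^2\,dx=O(\lambda_n^{\frac{2}{p-2}-\frac12})\to0$. For the remaining term I would insert \eqref{estim far away} into $\int_{A_n}u_n^2\,dx$, expand the square, and apply the elementary bound $\int_\cG e^{-a\,\dist(x,P)}\,dx\le C_\cG/a$, which holds on any metric graph with finitely many edges since the measure of $\{x:\dist(x,P)\le t\}$ grows at most linearly in $t$; this yields $\int_{A_n}u_n^2\,dx\le C\lambda_n^{\frac{2}{p-2}-\frac12}\to0$ as well. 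Therefore $\mu=0$, a contradiction, and consequently $\{\lambda_n\}$ — hence also $\{u_n\}\subset H^1(\cG)$ — is bounded.

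It then remains to check the Palais--Smale property for $E_1(\cdot\,,\cG)$. Since $u_n$ is a constrained critical point of $E_{\rho_n}(\cdot\,,\cG)$ on $H^1_\mu(\cG)$, one has $E_{\rho_n}'(u_n,\cG)\cdot\varphi=0$ for every $\varphi\in T_{u_n}H^1_\mu(\cG)$, while $(E_1'(u_n,\cG)-E_{\rho_n}'(u_n,\cG))\cdot\varphi=(\rho_n-1)\int_\cG u_n^{p-1}\varphi\,dx$; hence, by H\"older's inequality, the Sobolev embedding $H^1(\cG)\hookrightarrow L^p(\cG)$ and the $H^1$-bound just obtained, the norm of the constrained differential $E_1'(u_n,\cG)|_{H^1_\mu(\cG)}$ is at most $C|\rho_n-1|\to0$. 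Moreover $E_1(u_n,\cG)=c_n+\tfrac{\rho_n-1}{p}\int_\cG u_n^p\,dx=c_n+o(1)$ stays bounded. This is exactly the statement that $\{u_n\}$ is a bounded Palais--Smale sequence for $E_1(\cdot\,,\cG)$ constrained on $H^1_\mu(\cG)$.

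The main obstacle is the middle step: one must guarantee that under a (hypothetical) blow-up no mass survives at all — neither in the rescaling windows around the concentration points, where the precise rates of Theorem \ref{thm: blow-up 1}(iv)--(iv$'$) and the sign of $\tfrac{2}{p-2}-\tfrac12$ (i.e.\ the hypothesis $p>6$) are decisive, nor in the complement, where the global exponential decay of Theorem \ref{thm: blow-up 2} is essential. Keeping track of the finitely many vertex terms in \eqref{estim far away}, and of the possibility that some $P_n^i$ approach a vertex — in which case the local limit profile lives on a star-graph instead of on $\R$ — requires some care, but introduces no genuinely new difficulty.
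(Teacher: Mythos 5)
Your proposal is correct and follows essentially the same route as the paper: the same starting identities, the same contradiction argument via Theorems \ref{thm: blow-up 1} and \ref{thm: blow-up 2}, and the same decisive sign of the exponent $\tfrac12-\tfrac{2}{p-2}$ coming from $p>6$. The only cosmetic difference is that you show directly that all the mass (both near the concentration points and in the exponentially decaying region) tends to $0$, whereas the paper multiplies by $\lambda_n^{\frac12-\frac{2}{p-2}}$ and contrasts a quantity tending to $+\infty$ with a bounded one; the two formulations are equivalent.
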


\begin{proof}[Proof of Theorem \ref{thm: main ex}]
It is sufficient to apply Proposition \ref{thm: lambda bdd} on the sequence $\{u_{\rho_n}\}$ which, as observed, fulfills the assumptions.  Indeed, applying Lemma \ref{Embedding} we  then deduce that $u_n \to \bar u$ strongly in $H^1(\cG)$.
\end{proof}

\begin{proof}[Proof of Proposition \ref{thm: lambda bdd}]
Since
\[
\int_{\cG} \left(|u_{n}'|^2 + \lambda_n u_{n}^2\right)\,dx = \rho_n \int_{\cG} |u_{n}|^p\, dx,
\]
it follows that
\[
c_n =E_{\rho_n}(u_{n}, \cG) = \left( \frac12-\frac1p\right) \int_{\cG} |u_{n}'|^2\,dx - \frac{\lambda_n \mu}p;
\]
therefore
\begin{equation}\label{28011}
\left( \frac12-\frac1p\right) \int_{\cG} |u_{n}'|^2\,dx = c_n + \frac{\lambda_n \mu}p.
\end{equation}
This estimate gives the boundedness of $\{u_n\}$ in $H^1(\cG)$, provided that $\{\lambda_n\}$ is bounded (recall that $\{c_n\}$ is bounded as well).  Once the boundedness of $\{u_n\}$ in $H^1(\cG)$ is proved, and since $\rho_n \to 1$, the fact that it is a Palais-Smale sequence for $E_1(\cdot\,, \cG)$ constrained on $H^1_\mu(\cG)$ is straightforward.


Therefore, we only have to show that $\{\lambda_n\}$ is bounded. By contradiction, we suppose that this is not the case. By \eqref{28011}, we have that  $\lambda_n \to +\infty$, up to a subsequence. Thus, Theorems \ref{thm: blow-up 1} and \ref{thm: blow-up 2} hold for $u_n:=u_{\rho_n}$. For $\{P_n^1\}$, \dots, $\{P_n^k\}$ given by Theorem \ref{thm: blow-up 2}, Theorem \ref{thm: blow-up 1} ensures the existence of blow-up limits, which can be either defined on $\R$, or on a star graph $\cG_m$. In the rest of the proof:
\begin{itemize}
\item $\{v_{n}^i\}$ denotes the scaled sequence around $P_n^i$;
\item $V^i$ denotes the limit of $\{v^i_n\}$;
\item $\bar x^i_n$ denotes the global maximum point of $v^i_n$;
\item $\bar x^i$ denotes the global maximum point of $V^i$.
\end{itemize}
Then, for $R>0$, on one hand we have that
\begin{equation}\label{contrad 1}
\left| \lambda_n^{\frac{1}{2} - \frac{2}{p-2}} \int_{\cG} u_n^2\,dx - \sum_{i=1}^k \int_{B_R(\bar x_n^i)} (v_n^i)^2\,dx \right| \to +\infty
\end{equation}
(in the second integral, the ball $B_R(\bar x^i_n)$ is the ball in the scaled graph). Indeed, the first term inside the absolute value satisfies
\[
\lambda_n^{\frac{1}{2} - \frac{2}{p-2}} \int_{\cG} u_n^2\,dx = \lambda_n^{\frac{1}{2} - \frac{2}{p-2}} \mu \to +\infty,
\]
since $p>6$. While the second term is bounded, since by Theorem \ref{thm: blow-up 1}
\[
\sum_{i=1}^k \int_{B_R(\bar x_n^i)} (v_n^i)^2\,dx \to \int_{B_R(\bar x^i)} (V^i)^2\,dx,
\]
and it is the sum of a finite number of bounded integrals, being $V^i \in H^1(\cG_m)$.

On the other hand, by Theorem \ref{thm: blow-up 2}, for some positive constant $C$ which changes from one line to another,
\[
\begin{split}
\bigg| \lambda_n^{\frac{1}{2} - \frac{2}{p-2}} \int_{\cG} u_n^2\,dx &- \sum_{i=1}^k \int_{B_R(x_n^i)} (v_n^i)^2\,dx \bigg|  = \lambda_n^{\frac{1}{2} - \frac{2}{p-2}}\bigg| \int_{\cG} u_n^2\,dx- \sum_{i=1}^k\int_{B_{R \lambda_n^{-1/2}}(P_n^i)} u_n^2\,dx \bigg| \\
& = \lambda_n^{\frac{1}{2} - \frac{2}{p-2}} \int_{ \cG \setminus \bigcup_i B_{R \lambda_n^{-1/2}}(P_n^i)} u_n^2\,dx \\
& \le C_1 \lambda_n^{\frac{1}{2}} \sum_{i=1}^k\int_{ \cG \setminus \bigcup_i B_{R \lambda_n^{-1/2}}(P_n^i)}  e^{-C_2 \, \lambda_n^{\frac{1}{2}} \dist(x, P_n^i)}\,dx + C_1
\lambda_n^{\frac{1}{2}} \sum_{j=1}^{h}\int_{ \cG} e^{-C_2 \, \lambda_n^{\frac{1}{2}} \dist(x, {\rm v}_j )}\,dx
\\
& \le C \lambda_n^{\frac{1}{2}} \sum_{i=1}^k\int_{ \cG \setminus B_{R \lambda_n^{-1/2}}(P_n^i)}  e^{-C \, \lambda_n^{\frac{1}{2}} \dist(x, P_n^i)}\,dx + C
\lambda_n^{\frac{1}{2}} \sum_{j=1}^{h}\int_{ \cG} e^{-C \, \lambda_n^{\frac{1}{2}} \dist(x, {\rm v}_j )}\,dx
\\
& \le C \lambda_n^{\frac{1}{2}}  \int_{R \lambda_n^{-1/2}}^{+\infty} e^{- C  \, \lambda_n^{\frac{1}{2}} y}\,dy +   C \lambda_n^{\frac{1}{2}}  \int_{0}^{+\infty} e^{- C \, \lambda_n^{\frac{1}{2}} y}\,dy \\
&   \le C \int_R^{+\infty} e^{-C  z}\,dz + C \int_0^{\infty} e^{-C z} \, dz \\
& \le C e^{-C R} + C,
\end{split}
\]
By taking the limit as $n \to \infty$, we deduce that
\[
\limsup_{n \to \infty} \left| \lambda_n^{\frac{1}{2} - \frac{2}{p-2}} \int_{\cG} u_n^2\,dx - \sum_{i=1}^k \int_{B_r(\bar x_i)} (V^i)^2\,dx \right| \le C e^{-C R} + C,
\]
in contradiction with \eqref{contrad 1}.
\end{proof}

{\small
}


\begin{thebibliography}{99}

\bibitem{AcWe}
N.~Ackermann and T.~Weth.
\newblock Unstable normalized standing waves for the space periodic NLS.
\newblock {\em Anal. PDE} 12 (5): 1177-1213, 2019.

\bibitem{ACFN14}
R.~Adami, C.~Cacciapuoti, D.~Finco and D.~Noja.
\newblock Constrained energy minimization and orbital stability for the {NLS}
  equation on a star graph.
\newblock {\em Ann. Inst. H. Poincar\'{e} Anal. Non Lin\'{e}aire}
  31(6): 1289-1310, 2014.

\bibitem{AST-CVPDE2015}
R. Adami, E. Serra and P. Tilli.
\newblock NLS ground states on graphs.
\newblock {\em Calc. Var. Partial Differential Equations} 54 (1): 743-761, 2015.


\bibitem{AST-JFA2016}
R. Adami, E. Serra and P. Tilli.
\newblock Threshold phenomena and existence results for NLS ground states on metric graphs.
\newblock {\em J. Funct. Anal.} 271 (1): 201-223, 2016.

\bibitem{AST-CMP2017}
R. Adami, E. Serra and P. Tilli.
\newblock Negative energy ground states for the $L^2$-critical NLSE on metric graphs.
\newblock {\em Comm. Math. Phys.} 352 (1):
387-406, 2017.



\bibitem{AST11}
R.~Adami, E.~Serra and P.~Tilli.
\newblock Nonlinear dynamics on branched structures and networks.
\newblock {\em Riv. Math. Univ. Parma (N.S.)} 8(1):109-159, 2017.


\bibitem{BaSo}
T.~Bartsch and N.~Soave.
\newblock A natural constraint approach to normalized solutions of nonlinear {S}chr\"odinger equations and systems.
\newblock {\em J. Funct. Anal.}, 272 (12): 4998-5037, 2017.

\bibitem{BeRu}
J.~Bellazzini and D.~Ruiz.
\newblock Finite energy traveling waves for the {G}ross-{P}itaevskii equation in the subsonic regime.
\newblock Preprint arXiv 1911.02820, 2019. To appear on {\em American J. of Mathematics}.



\bibitem{BK}
G.~Berkolaiko and P.~Kuchment.
\newblock {\em Introduction to quantum graphs}, Vol. 186 of {\em Mathematical
  Surveys and Monographs}.
\newblock American Mathematical Society, Providence, RI, 2013.

\bibitem{BD19}
F.~Boni, and S.~Dovetta.
\newblock Prescribed mass ground states for a doubly nonlinear {S}chr\"odinger equation in dimension one. \newblock {\em J. Math. Anal. Appl.} 496(1): Article ID 124797, 17 p., 2021.


\bibitem{BoChJeSo}
J.~Borthwick, X.~Chang, L.~ Jeanjean and Nicola Soave.
\newblock Bounded Palais-Smale sequences with Morse type information for some constrained functionals.
\newblock {Preprint}, 2022.



\bibitem{CDS}
C.~Cacciapuoti, S.~Dovetta and E.~Serra.
\newblock Variational and stability properties of constant solutions to the {NLS}
equation on compact metric graphs.
\newblock {\em Milan J. Math.} 86(2): 305-327, 2018.

\bibitem{D-JDE2018}
S.~Dovetta.
\newblock Existence of infinitely many stationary solutions of the $L^2$-
subcritical and critical NLSE on compact metric graphs.
\newblock {\em J. Differential
Equations} 264 (7): 4806-4821, 2018.


\bibitem{DGMP}
S.~Dovetta, M.~Ghimenti, A.~M.~Micheletti and A.~Pistoia.
\newblock Peaked and low action solutions of {NLS} equations on graphs with terminal edges.
\newblock {\em SIAM J. Math. Anal.} 52 (3): 2874-2894, 2020.



\bibitem{DTcalcvar}
S.~Dovetta and L.~Tentarelli.
\newblock {$L^2$}-critical {NLS} on noncompact metric graphs with localized
  nonlinearity: topological and metric features.
\newblock {\em Calc. Var. Partial Differential Equations} 58 (3): Paper No. 108, 26 p., 2019.

\bibitem{Espetal}
P.~Esposito, G.~Mancini, S.~Santra and P.~N.~Srikanth.
\newblock Asymptotic behavior of radial solutions for a semilinear elliptic problem on an annulus through Morse index.
\newblock {\em J. Differential Equations} 239 (1): 1-15, 2007.

\bibitem{EspPet}
P.~Esposito and M.~Petralla.
\newblock Pointwise blow-up phenomena for a Dirichlet problem.
\newblock {\em Commun. Partial Differ. Equations} 36 (7-9): 1654-1682, 2011.

\bibitem{FG1}
G.~Fang and N.~Ghoussoub.
\newblock Second-order information on Palais-Smale sequences in the mountain pass theorem.
\newblock {\em Manuscripta Math.} 75(1): 81-95, 1992.


\bibitem{FG-1994}
G.~Fang and N.~Ghoussoub.
\newblock Morse-type information on Palais-Smale sequences obtained by min-max principles.
\newblock {\em Comm. Pure Appl. Math.} 47: 1595-1653, 1994.




\bibitem{F-2005}
L. Friedlander.
\newblock Extremal properties of eigenvalues for a metric graph.
\newblock {\em Annales de l'Institut Fourier}, 55: 199-211, 2005.




\bibitem{IkNo}
N.~Ikoma and K.~Tanaka.
\newblock A note on deformation argument for {$L^2$}
 normalized solutions of nonlinear {S}chr\"odinger equations and systems.
\newblock {\em Adv. Differ. Equ.} 24 (11-12): 609-646, 2019.


\bibitem{J}
L.~Jeanjean.
\newblock Existence of solutions with prescribed norm for semilinear elliptic
  equations.
\newblock {\em Nonlinear Anal.} 28 (10): 1633-1659, 1997.


\bibitem{J-PRSE1999}
L.~Jeanjean.
\newblock On the existence of bounded {P}alais-{S}male sequences
and application to a {L}andesman-{L}azer-type problem set on
$\mathbb{R}^N$.
\newblock {\em Proc. Roy. Soc. Edinburgh Sect. A} 129:
 787-809, 1999.



\bibitem{KNP}
A.~Kairzhan, D.~Noja and D.~E. Pelinovsky.
\newblock Standing waves on quantum graph.
J. Phys. A: Math. Theor. 55 243001, 2022




\bibitem{La}
 S.~Lang.
 \newblock {\em Fundamentals of differential geometry}.
 \newblock Graduate Texts in Mathematics. Series Profile. 191. New York, NY: Springer. xvii, 535 p. 1999.

\bibitem{LoMaRu}
R.~Lopez-Soriano, A.~Malchiodi and D.~Ruiz.
\newblock Conformal metrics with prescribed Gaussian and geodesic curvatures.
\newblock arXiv.1806.11533, 2018, to appear in Ann. Sci. Ec. Norm. Sup\'er.

	
\bibitem{Ma}
A.~Masiello.
\newblock {\em Variational methods in Lorentzian geometry.}
\newblock Pitman Research Notes in Mathematics Series. 309. Harlow, Essex: Longman Scientific \& Technical. New York, NY: Wiley. xix, 175 p. 1994.



 \bibitem{No}
D.~Noja.
\newblock Nonlinear {S}chr\"{o}dinger equation on graphs: recent results and
  open problems.
\newblock {\em Philos. Trans. R. Soc. Lond. Ser. A Math. Phys. Eng. Sci.},
  372(2007):20130002, 20, 2014.
	
	\bibitem{NP}
D.~Noja and D.~E. Pelinovsky.
\newblock Standing waves of the quintic {NLS} equation on the tadpole graph.
\newblock {\em Calc. Var. Partial Differential Equations} 59 (5): Paper No. 173, 30 p., 2020.


	
	\bibitem{Pa}
R.~Palais.
\newblock Morse theory on Hilbert manifolds.
\newblock {\em Topology} 4 (2): 299-340, 1963.



\bibitem{PiSo}
D.~Pierotti and N.~Soave.
\newblock Ground states for the {NLS} equation with combined nonlinearities on noncompact metric graphs.
\newblock {\em SIAM J. Math. Anal.}, 54 (1): 768-790, 2022.

\bibitem{PSV}
D.~Pierotti, N.~Soave, and G.~Verzini.
\newblock Local minimizers in absence of ground states for the critical nls
  energy on metric graphs.
\newblock {\em Proc. Royal Soc. Edinburgh, Sect. A: Math.}, 151 (2): 705-733, 2021.

\bibitem{PieVer}
D.~Pierotti and G.~Verzini.
\newblock Normalized bound states for the nonlinear {S}chr\"odinger equation in bounded domains.
\newblock {\em Calc. Var. Partial Differential Equations} 56 (5): Paper No. 133, 27 p., 2017.


\bibitem{ST-JDE2016}
E. Serra and L. Tentarelli.
\newblock Bound states of the NLS equation on metric graphs with localized nonlinearities.
\newblock {\em J. Differential Equations} 260 (7): 5627-5644, 2016.




\end{thebibliography}
\end{document}